\numberwithin{equation}{section}
\title{Armstrong's Conjecture for $(k, mk + 1)$-Core Partitions}
\author{Amol Aggarwal}
\begin{document}

\maketitle

\begin{abstract}
A conjecture of Armstrong states that if $\gcd (a, b) = 1$, then the average size of an $(a, b)$-core partition is $(a - 1)(b - 1)(a + b + 1) / 24$. Recently, Stanley and Zanello used a recursive argument to verify this conjecture when $a = b - 1$. In this paper we use a variant of their method to establish Armstrong's conjecture in the more general setting where $a$ divides $b - 1$. 
\end{abstract}

\newtheorem{thm}{Theorem}[section]
\newtheorem{prop}[thm]{Proposition}
\newtheorem{lem}[thm]{Lemma}
\newtheorem{cor}[thm]{Corollary}
\newtheorem{conj}[thm]{Conjecture}

\section{Introduction}

A {\itshape partition} is a finite, nonincreasing sequence $\lambda = (\lambda_1, \lambda_2, \ldots , \lambda_r)$ of positive integers. The sum $\sum_{i = 1}^r \lambda_i$ is the {\itshape size} of $\lambda$ and is denoted by $|\lambda|$. We may represent $\lambda$ by a {\itshape Young diagram}, which is a collection of $r$ left-justified rows of cells with $\lambda_i$ cells in row $i$. The {\itshape hook length} of any cell $C$ in the Young diagram is defined to be the number of cells to the right of, below, or equal to $C$. For instance, Figure 1 shows the Young diagram and hook lengths of the partition $(5, 3, 1, 1)$. 

For any positive integers $a$ and $b$, a partition is called an {\itshape $(a, b)$-core} if no cell in its Young diagram has hook length equal to $a$ or $b$; for instance, Figure 1 shows that $(5, 3, 1, 1)$ is a $(3, 7)$-core. Simultaneous core partitions have been the topic of many articles during the past decade (see \cite{3, 4, 5, 7, 8, 9, 10, 11, 14, 15}). They are particularly interesting when $\gcd (a, b) = 1$. In this case, there are only finitely many $(a, b)$-cores; in fact, a theorem of Anderson states that there are $\binom{a + b}{a} / (a + b)$ such cores \cite{4}. 

The proof of Anderson's theorem is through a bijective correspondence between $(a, b)$-cores and order ideals of the poset $P_{a, b}$, whose elements are all positive integers not contained in the numerical semigroup generated by $\{ a, b \}$ and whose partial order is fixed by requiring $p \in P_{a, b}$ to cover $q \in P_{a, b}$ if $p - q$ is either $a$ or $b$ (throughout the article, we will follow the poset terminology given in Chapter 3 of Stanley's text \cite{12, 13}). Specifically, this correspondence sends an $(a, b)$-core partition $\lambda = (\lambda_1, \lambda_2, \ldots , \lambda_r)$ to the order ideal $I_{\lambda} = \{ \lambda_1 + r - 1, \lambda_2 + r - 2, \ldots , \lambda_r \} \in J (P_{a, b})$, where $J(P)$ denotes the set of order ideals of any poset $P$; observe that $I_{\lambda}$ consists of the hook lengths in the leftmost column of the Young diagram of $\lambda$. From this bijection, we deduce the identity 
\begin{flalign}
\label{partitionideals}
|\lambda| = \sigma (I_{\lambda}) - \binom{|I_{\lambda}|}{2}, 
\end{flalign}

\noindent where $\sigma (I_{\lambda}) = \sum_{i \in I_{\lambda}} i$ is the sum of the elements in $I_{\lambda}$. 

To see an example of this bijection, let $(a, b) = (3, 7)$; then, $P_{3, 7} = \{ 1, 2, 4, 5, 8, 11 \}$. In this poset, $11$ covers $4$ and $8$; $8$ covers $1$ and $5$; $5$ covers $2$; and $4$ covers $1$. The $(3, 7)$-core $(5, 3, 1, 1)$ corresponds to the order ideal $ \{ 8, 5, 2, 1 \} \subset P_{3, 7}$; equation \hyperref[partitionideals]{(\ref*{partitionideals})} may be verified since $(5, 3, 1, 1)$ has size $10$ and $\sigma (\{ 8, 5, 2, 1 \}) = 16$. 

\begin{figure}
\begin{center}
\ytableausetup{centertableaux}
\begin{ytableau}
8 & 5 & 4 & 2 & 1 \\
5 & 2 & 1 \\
2 \\
1
\end{ytableau}

\caption{The Young diagram of (5, 3, 1, 1) is shown above; each cell contains its hook length. } 
\end{center}
\end{figure}

In 2011, Armstrong informally proposed the following conjecture that predicts the average size of an $(a, b)$-core; this conjecture was later published in \cite{5}.  
\begin{conj}
\label{conjecture}
If $\gcd (a, b) = 1$, then 
\begin{flalign}
\label{sum}
\displaystyle\sum_{\lambda} |\lambda| = \displaystyle\frac{(a - 1)(b - 1)(a + b + 1)}{24 (a + b)} \binom{a + b}{a}, 
\end{flalign}

\noindent where $\lambda$ is summed over all $(a, b)$-cores. Equivalently, the average size of an $(a, b)$-core is $(a - 1)(b - 1)(a + b + 1) / 24$. 
\end{conj}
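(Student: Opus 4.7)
The plan is to use Anderson's bijection between $(a,b)$-cores and order ideals of $P_{a,b}$ together with the identity $|\lambda| = \sigma(I_\lambda) - \binom{|I_\lambda|}{2}$ from \eqref{partitionideals}. This reduces Conjecture \ref{conjecture} to the combinatorial identity
\begin{flalign*}
\sum_{I \in J(P_{a,b})} \sigma(I) - \sum_{I \in J(P_{a,b})} \binom{|I|}{2} = \frac{(a-1)(b-1)(a+b+1)}{24(a+b)}\binom{a+b}{a}.
\end{flalign*}

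My first move would be to exploit the classical symmetry of $P_{a,b}$ coming from symmetric numerical semigroups: when $\gcd(a,b)=1$, the map $\phi(p) = ab-a-b-p$ is an order-reversing involution of $P_{a,b}$, and thus induces an involution $I \mapsto \phi(P_{a,b}\setminus I)$ on $J(P_{a,b})$ sending an ideal of size $r$ to one of size $(a-1)(b-1)/2 - r$. Averaging immediately gives $\sum_I |I| = \tfrac{1}{2} N \cdot |P_{a,b}|$, where $N = \binom{a+b}{a}/(a+b)$, but the second moment $\sum_I |I|^2$ and the sum $\sum_I \sigma(I)$ are not determined by this involution alone.

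For these, I would translate to the abacus / affine symmetric group model, in which $(a,b)$-cores correspond to integer vectors $(n_0, n_1, \ldots, n_{a-1})$ satisfying $\sum n_i = 0$ subject to inequalities determined by $b$, with the size $|\lambda|$ becoming an explicit quadratic form in the $n_i$. The feasible region is a rational simplex --- essentially a dilation by $b$ of the fundamental alcove of $\mathfrak{sl}_a$ --- on which the Weyl group $S_a$ acts by permuting coordinates. The target sum is then a quadratic polynomial summed over the lattice points of this simplex, and averaging over the $S_a$-action reduces the quadratic to a symmetric expression in the squared coordinates that one can hope to evaluate using the natural symmetry of the alcove.

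The main obstacle will be the boundary corrections in the lattice-point sum: because the simplex is rational rather than integral, one cannot simply replace the discrete sum by an integral, and the correct Ehrhart-type corrections have to be extracted carefully. A second, more structural obstacle is that, unlike the Stanley--Zanello setting (which the present paper extends to $b = mk+1$), there is no obvious recursion reducing a general coprime pair $(a,b)$ to smaller cases, so the argument must confront the full alcove geometry directly, and the final numerical factor $(a-1)(b-1)(a+b+1)/24$ must emerge from one careful quadratic sum rather than from a clean induction on $a$ or $b$.
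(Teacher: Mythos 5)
The statement you set out to prove is stated in the paper only as a conjecture: the paper itself establishes it just for $(a,b) = (k, mk+1)$ (Theorem 1.2), via Anderson's bijection and a recursive decomposition of $J(P_{k,mk+1})$, so there is no proof of the general statement in the paper to compare yours against. Your opening reduction --- Anderson's bijection together with identity (1.1) --- is correct and coincides with the paper's first step. The genuine gap is your ``first move.'' The map $\phi(p) = ab-a-b-p$ is \emph{not} an involution of $P_{a,b}$: the symmetry of the numerical semigroup $S = \langle a,b\rangle$ says that $x \in S$ if and only if $F - x \notin S$, where $F = ab-a-b$, so $\phi$ carries gaps to elements of $S$, not gaps to gaps. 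Already for $(a,b)=(3,7)$ one has $\phi(1) = 10 = 3+7 \in S$. Correspondingly $P_{a,b}$ is not self-dual in general ($P_{3,7}$ has one maximal element but two minimal elements), and no involution of any kind can send ideals of size $r$ to ideals of size $(a-1)(b-1)/2 - r$, because the size distribution is not palindromic: the twelve order ideals of $P_{3,7}$ have sizes $0,1,1,2,2,2,3,3,4,4,5,6$. In particular your claimed identity $\sum_I |I| = \tfrac{1}{2} N\,|P_{a,b}|$ is false; for $(3,7)$ the left side is $33$ while the right side is $\tfrac{1}{2}\cdot 12 \cdot 6 = 36$, and it fails even in the Stanley--Zanello case $(3,4)$, where the five ideals have total size $7$, not $7.5$.

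Beyond this, the remainder of the proposal is a program rather than a proof. The translation to the abacus model, in which $|\lambda|$ becomes an explicit quadratic form on the lattice points of a $b$-fold dilation of the fundamental alcove of $\mathfrak{sl}_a$, is a genuinely viable route to the full conjecture --- indeed, lattice-point and Ehrhart-theoretic arguments of exactly this flavor are how the general statement was eventually settled, after this paper appeared --- but as written you leave precisely the hard steps unexecuted: the evaluation of $\sum_I \sigma(I)$ and of the second moment $\sum_I |I|^2$, and the rational-simplex boundary corrections you flag yourself. You are right that no recursion is available in general; that observation is exactly the dividing line with the paper, whose method lives or dies on the fact that for $b = mk+1$ the posets $P_{k,b}$ close up under deleting minimal elements (yielding the family $P_n^{(j)}$ and algebraic equations for the generating functions $A$, $T$, $R$, $G$). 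So your plan targets the right generality and sketches the approach that can work there, but in its present form it rests on a false symmetry lemma and omits the computations that would constitute the actual proof.
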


In addition to having an intrinsic appeal, a proof of \hyperref[conjecture]{Conjecture \ref*{conjecture}} would yield implications about numerical semigroups generated by two elements. Yet, despite the ostensible simplicity of \hyperref[sum]{(\ref*{sum})}, it remains unproven. However, there have recently been several partial results towards Armstrong's conjecture. In 2013, Stanley and Zanello used a recursive method to prove \hyperref[conjecture]{Conjecture \ref*{conjecture}} when $a = b - 1$ \cite{14}. In response to another conjecture in \cite{4}, Chen, Huang, and Wang later established an analog of Armstrong's conjecture for self-conjugate core partitions using the Ford-Mai-Sze bijection \cite{7}. 

In this paper we use a variant of the recursive method given by Stanley and Zanello to verify a more general case of \hyperref[conjecture]{Conjecture \ref*{conjecture}}. In particular, we prove the theorem below. 

\begin{thm}
\label{partitiontheoremgeneral}
For any integers $k, m \ge 1$, Armstrong's conjecture holds for $(k, mk + 1)$-cores. Specifically, 
\begin{flalign*}
\displaystyle\sum_{\lambda} |\lambda| = \displaystyle\frac{mk (k - 1) ((m + 1)k + 2)}{24 (mk + 1)} \binom{(m + 1)k}{k}, 
\end{flalign*}

\noindent where $\lambda$ is summed over all $(k, mk + 1)$-cores. 
\end{thm}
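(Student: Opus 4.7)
The plan is to apply~(\ref{partitionideals}) to rewrite
\begin{flalign*}
\sum_{\lambda} |\lambda| = \sum_{I \in J(P_{k, mk+1})} \sigma (I) - \sum_{I \in J(P_{k, mk+1})} \binom{|I|}{2},
\end{flalign*}
and to evaluate both sums recursively. The first step is to unpack the structure of $P_{k, mk+1}$. Since $mk + 1 \equiv 1 \pmod{k}$, the elements of $P_{k, mk+1}$ partition according to their residue modulo $k$ into $k - 1$ chains, with the $j$-th chain $C_j = \{ ck + j : 0 \le c \le jm - 1 \}$ for $j = 1, 2, \ldots, k - 1$. The cover relations coming from $mk + 1$ link consecutive chains, sending $ck + j$ down to $(c - m) k + (j - 1)$ whenever $c \ge m$ and $j \ge 2$. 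Consequently, an order ideal $I \in J(P_{k, mk+1})$ is uniquely determined by the sequence $(n_1, n_2, \ldots, n_{k-1})$ in which $n_j$ records the number of elements of $C_j$ lying in $I$; such sequences are characterized by $0 \le n_j \le jm$ together with the slope constraints $n_{j-1} \ge n_j - m$ for $j = 2, 3, \ldots, k - 1$.

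In this language, equation~(\ref{partitionideals}) unfolds as
\begin{flalign*}
|\lambda| = k \sum_{j = 1}^{k - 1} \binom{n_j}{2} + \sum_{j = 1}^{k - 1} j \, n_j - \binom{n_1 + n_2 + \cdots + n_{k-1}}{2}.
\end{flalign*}
I would then induct on $k$. Deleting the chain $C_{k-1}$ from $P_{k, mk+1}$ leaves a subposet isomorphic to $P_{k - 1, m(k - 1) + 1}$, which suggests expressing the sums over $J(P_{k, mk+1})$ as weighted sums over $J(P_{k-1, m(k-1)+1})$ by conditioning on the value of $n_{k-1}$; the constraint $n_{k-2} \ge \max (0, n_{k-1} - m)$ then determines the weight. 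Because the coefficient $k$ appearing in front of $\sum_j \binom{n_j}{2}$ shifts to $k - 1$ when one passes to the smaller poset, the recursion must track several auxiliary sums simultaneously, including $\sum_I \sum_j \binom{n_j}{2}$, $\sum_I \sum_j j \, n_j$, $\sum_I \binom{|I|}{2}$, $\sum_I |I|$, and the total number of ideals, each parameterized by a lower bound on the terminal chain height of the smaller poset.

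The main obstacle is two-fold. First, one must identify the correct collection of auxiliary statistics that closes under this recursion and whose initial values conform to the proposed answer; the $m \ge 2$ case is genuinely more intricate than the Stanley--Zanello setting treated in~\cite{14}, in which chain $C_j$ has length $j$ rather than $jm$ and the slope constraints are significantly tamer. Second, the inductive step reduces to a family of binomial identities that compare the recursive expressions to the proposed closed form for $\sum_\lambda |\lambda|$ and to Anderson's count $\binom{(m+1)k}{k} / (mk + 1)$ of $(k, mk+1)$-cores; verifying these identities through careful binomial and hypergeometric manipulations is expected to be the most computation-heavy part of the argument.
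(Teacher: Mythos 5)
Your structural analysis of $P_{k,mk+1}$ is correct: the decomposition into chains $C_j$ of length $jm$, the characterization of order ideals by sequences $(n_1,\dots,n_{k-1})$ with $0\le n_j\le jm$ and $n_{j-1}\ge n_j-m$, the resulting formula for $|\lambda|$, and the observation that deleting the top chain $C_{k-1}$ (which is an up-set) leaves a poset isomorphic to $P_{k-1,m(k-1)+1}$ are all right. But the argument stops exactly where the work begins. Conditioning on $n_{k-1}=t$ does not decouple the problem: the remaining ideal is constrained by $n_{k-2}\ge t-m$, so every sum you need over $J(P_{k-1,m(k-1)+1})$ must be refined by the height of \emph{its} terminal chain, and the refinement parameter ranges up to $(k-2)m$, which grows with $k$. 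You acknowledge this (``parameterized by a lower bound on the terminal chain height'') but never exhibit a finite collection of statistics that closes under the recursion, never write the recursion down, and defer the resulting ``family of binomial identities'' to unspecified hypergeometric manipulations. Since producing a closed system and verifying the ensuing identities is the entire content of the theorem, what you have is a plan with its two hardest steps missing, not a proof; it is not even clear that the system you propose is finite-dimensional without introducing a second generating-function variable to track the terminal height.

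The paper sidesteps the coupling issue by decomposing an order ideal of $P_n$ ($n=k$) according to the smallest minimal element $i$ it omits: the ideal splits as $\{1,\dots,i-1\}\cup I_1\cup I_2$, where $I_1$ and $I_2$ range \emph{independently} over the order ideals of two smaller posets in the same family ($P_{i-1}^{(1)}$ and $P_{n-i}$). Independence turns every sum into a convolution, so the finitely many generating functions $A$, $T_j$, $R_j$, $G_j$ ($j=0,\dots,m-1$) satisfy algebraic relations over $\mathbb{Q}\big(x,A(x)\big)$ with $xA(x)^{m+1}-A(x)+1=0$, and the target identity becomes a polynomial identity in $x$ and $A(x)$ that is checked symbolically. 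If you wish to pursue the top-chain route, the concrete missing ingredient is a bivariate (height-refined) version of each of your statistics together with a mechanism (e.g.\ a kernel-method argument) for eliminating the extra variable; absent that, the induction on $k$ cannot be carried out.
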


\noindent As Stanley and Zanello did in the case $m = 1$, we will prove the theorem above by using Anderson's bijection and the manageable behavior of the poset $P_{k, mk + 1}$. Applying \hyperref[partitionideals]{(\ref*{partitionideals})}, we see that \hyperref[partitiontheoremgeneral]{Theorem \ref*{partitiontheoremgeneral}} is equivalent to the theorem below.

\begin{thm}
\label{idealtheoremgeneral}
For any integers $m, k \ge 1$, 
\begin{flalign}
\label{sumidealsgeneral}
\displaystyle\sum_{I \in J (P_{k, mk + 1})} \left( \sigma(I) - \binom{|I|}{2} \right) = \displaystyle\frac{mk (k - 1) ((m + 1)k + 2)}{24 (mk + 1)} \binom{(m + 1)k}{k}. 
\end{flalign}
\end{thm}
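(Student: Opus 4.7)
My strategy is to adapt the recursive method of Stanley and Zanello to the broader setting of $P_{k, mk+1}$.

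I will first work out the combinatorial structure of $P_{k, mk+1}$ explicitly. The numerical semigroup generated by $k$ and $mk+1$ has complement, within the positive integers, consisting for each residue class $j \pmod k$ with $1 \le j \le k-1$ of the chain $C_j = \{j + tk : 0 \le t \le jm - 1\}$ of length $jm$. The covering relations are that $j + (t+1)k$ covers $j + tk$ within $C_j$, and $(j+1) + (t+m)k$ covers $j + tk$ from $C_j$ into $C_{j+1}$. Hence each order ideal $I \in J(P_{k, mk+1})$ corresponds bijectively to a tuple $(c_1, \ldots, c_{k-1})$ with $c_j = |I \cap C_j|$, subject to $0 \le c_j \le jm$ and $c_{j+1} \le c_j + m$; in these coordinates,
\begin{align*}
|I| = \sum_{j=1}^{k-1} c_j, \qquad \sigma(I) = \sum_{j=1}^{k-1} \left( j c_j + k \binom{c_j}{2} \right).
\end{align*}

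Next I will set up a recursion on $k$ by conditioning on $c_{k-1} = s$. Let $T(k) = \sum_{I} [\sigma(I) - \binom{|I|}{2}]$, and introduce auxiliary quantities $S(k) = \sum_I |I|$ and $N(k) = |J(P_{k, mk+1})|$. For each $s \in \{0, 1, \ldots, (k-1)m\}$, the inner sum ranges over tuples $(c_1, \ldots, c_{k-2})$ satisfying the original constraints together with the boundary condition $c_{k-2} \ge \max(s - m, 0)$. These boundary-shifted sums will be expanded in terms of $T(k-1)$, $S(k-1)$, and $N(k-1)$ plus analogous quantities over truncated sub-posets, yielding a coupled recursion among $T$, $S$, and $N$. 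The cross-term $\binom{|I|}{2}$ links all columns, which is precisely why tracking $T$ alone is insufficient and the linear and constant moments must be carried alongside.

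Finally, I will complete the proof by strong induction on $k$. The base case $k = 2$ reduces to $P_{2, 2m+1}$, a single chain of $m$ elements, where a direct computation gives $T(2) = m(m+1)(m+2)/6$, matching the claimed formula. For the inductive step, I substitute the closed-form expressions for $T(k-1)$, $S(k-1)$, and $N(k-1)$ (and the truncated-poset analogs) into the recursion, reducing the identity to a polynomial in $k$ whose vanishing should follow from standard Fuss--Catalan binomial identities and Vandermonde-type summations.

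The main obstacle I anticipate is the algebraic verification in the inductive step. The $\binom{|I|}{2}$ term obstructs any column-by-column factorization, so the recursion must carry extra information, and closed forms for the auxiliary quantities $S(k)$ and $N(k)$ must also be established; matching the result with the elegant factored expression $\frac{mk(k-1)((m+1)k+2)}{24(mk+1)}\binom{(m+1)k}{k}$ will require careful manipulation of Fuss--Catalan identities, and I expect this bookkeeping to be the most delicate part of the proof.
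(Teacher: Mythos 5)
Your explicit description of $P_{k,mk+1}$ is correct: the gaps in residue class $j$ form the chain $C_j$ of length $jm$, the order ideals are exactly the tuples $(c_1,\dots,c_{k-1})$ with $0\le c_j\le jm$ and $c_{j+1}\le c_j+m$, and your formulas for $|I|$ and $\sigma(I)$ check out, as does the base case $k=2$ (both sides equal $\binom{m+2}{3}$). The gap is in the recursive step. Conditioning on $c_{k-1}=s$ leaves a sum over $J(P_{k-1,m(k-1)+1})$ restricted by $c_{k-2}\ge\max(s-m,0)$; summing over $s$ first, each ideal $I'$ of the smaller poset extends in exactly $c_{k-2}(I')+m+1$ ways, so already $N(k)=(m+1)N(k-1)+\sum_{I'}c_{k-2}(I')$ involves the first moment of the top-chain size, computing that first moment one level down requires the second moment, and so on: the system never closes on $T$, $S$, $N$ and finitely many ``truncated analogues,'' but demands all moments of the last coordinate (equivalently the full bivariate generating function in $k$ and $c_{k-1}$, i.e., a transfer-matrix or kernel computation that you have not set up). You are also missing one statistic at the level of bookkeeping: rescaling $\sigma$ from $P_k$ to $P_{k-1}$ (the element in position $(j,t)$ has value $j+tk$ in one poset and $j+t(k-1)$ in the other) produces an extra term $\sum_j\binom{c_j}{2}$ that must be carried as a separate quantity throughout.

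The paper's decomposition is chosen precisely so that the recursion does close. Instead of peeling off the top chain, it conditions on the least minimal element $i$ \emph{not} contained in the ideal; this splits the poset into two \emph{independent} pieces, one isomorphic to $P_{n-i}$ and one isomorphic to a poset $P_{i-1}^{(1)}$ obtained by deleting minimal elements, so every sum becomes a convolution and only the finitely many statistics $A$, $T=\sum_I|I|$, $R=\sum_I\sum_j\binom{c_j}{2}$, and $G$ over the finite family $P_n^{(j)}$, $0\le j\le m-1$, are needed. These convolutions become algebraic relations among generating functions, which combined with $xA(x)^{m+1}-A(x)+1=0$ reduce the theorem to a polynomial identity in $x$ and $A(x)$. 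To salvage your route you would have to replace the moment bookkeeping by the full generating function in the top coordinate; as written, the promised ``coupled recursion among $T$, $S$, and $N$'' does not exist.
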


\noindent We will prove this theorem in Section 3.

\section{Proof of Theorem 1.3 When $m = 2$}
The proof of \hyperref[idealtheoremgeneral]{Theorem \ref*{idealtheoremgeneral}} is quite computational, so we will first verify \hyperref[idealtheoremgeneral]{Theorem \ref*{idealtheoremgeneral}} when $m = 2$. In particular, we will establish the following result. 

\begin{thm}
\label{idealtheorem2}
For any integer $k \ge 1$, 
\begin{flalign}
\label{sumideals2}
\displaystyle\sum_{I \in J(P_{k, 2k + 1})} \left( \sigma(I) - \binom{|I|}{2} \right) = \displaystyle\frac{k (k - 1) (3k + 2)}{12 (2k + 1)} \binom{3k}{k}. 
\end{flalign}
\end{thm}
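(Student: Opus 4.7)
My plan is to follow the inductive recursive method of Stanley and Zanello. The first step is to describe $P_{k, 2k+1}$ concretely. For $0 \le r < k$, the smallest element of the semigroup $\langle k, 2k+1 \rangle$ in residue class $r \pmod k$ is $r(2k+1)$, so the positive integers in that class failing to lie in the semigroup are exactly $C_r = \{r, r + k, r + 2k, \ldots, r + (2r-1)k\}$, a chain of length $2r$ under the cover $p \prec p + k$. The additional cover $p \prec p + (2k+1)$ connects $r + ik \in C_r$ to $(r+1) + (i+2)k \in C_{r+1}$ whenever both lie in the poset. Consequently, an order ideal $I \in J(P_{k, 2k+1})$ is equivalent to a tuple $(a_1, \ldots, a_{k-1})$, where $a_r := |I \cap C_r|$ satisfies $0 \le a_r \le 2r$ and $a_{r+1} \le a_r + 2$.

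Next, I would observe that the subposet of $P_{k, 2k+1}$ on $C_1 \cup \cdots \cup C_{k-2}$ is order-isomorphic to $P_{k-1, 2k-1}$ via $r + ik \mapsto r + i(k-1)$. This identifies $J(P_{k, 2k+1})$ with pairs $(J, a_{k-1})$, where $J \in J(P_{k-1, 2k-1})$ and $a_{k-1}$ ranges freely over $\{0, 1, \ldots, a_{k-2}(J) + 2\}$. Writing $\sigma(I) = \sum_{r=1}^{k-1}\bigl[r a_r + k\binom{a_r}{2}\bigr]$ and $\binom{|I|}{2} = \binom{|J| + a_{k-1}}{2}$, and noting that the relabeling $k \mapsto k-1$ shifts $\sigma$ by $\sum_{r=1}^{k-2} \binom{a_r(J)}{2}$, a direct expansion shows that $\sigma(I) - \binom{|I|}{2}$ equals the analogous quantity for $J$ in $P_{k-1, 2k-1}$ plus a polynomial of degree two in $a_{k-1}$ whose coefficients are linear combinations of $|J|$, $a_{k-2}(J)$, and $\sum_r \binom{a_r(J)}{2}$. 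Summing over $a_{k-1} = 0, 1, \ldots, a_{k-2}(J) + 2$ collapses each fiber to a closed expression in these same statistics of $J$.

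Summing over $J \in J(P_{k-1, 2k-1})$ therefore expresses the left-hand side of \hyperref[sumideals2]{(\ref*{sumideals2})} as a linear combination of the corresponding sum for $k-1$ together with several auxiliary moments over $J(P_{k-1, 2k-1})$, including $\sum_J 1$, $\sum_J |J|$, $\sum_J a_{k-2}(J)^j$ for $j \le 3$, $\sum_J |J|\,a_{k-2}(J)^j$, and mixed moments involving $\sum_r \binom{a_r(J)}{2}$. I would derive recurrences for each of these by the same fibration argument, bundle them into a vector-valued recurrence, and verify by joint induction on $k$ that each has the predicted closed form. The base case $k = 1$ is immediate since $P_{1,3}$ is empty, and one can also verify $k = 2$ directly. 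Once the coupled system is solved, the final check that the target $\frac{k(k-1)(3k+2)}{12(2k+1)}\binom{3k}{k}$ satisfies the induced scalar recurrence reduces, after clearing the binomial coefficient, to a polynomial identity in $k$. The main obstacle is the bookkeeping: one must guess and then simultaneously verify closed forms for all of the coupled auxiliary moments, since the recursion for $\sum_J |\lambda_J|$ alone does not close up.
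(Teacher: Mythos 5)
Your setup is correct and genuinely different from the paper's. The paper decomposes $J(P_{k,2k+1})$ according to the least minimal element $i$ missing from the ideal, which splits the poset into pieces isomorphic to $Q_{i-1}$ and $P_{k-i}$; this yields convolution recursions that become algebraic identities among generating functions $T$, $R$, $G$, solved in closed form via $xA(x)^3-A(x)+1=0$. You instead peel off the longest chain $C_{k-1}$, reducing $k$ to $k-1$. Your combinatorial description is right: the chains $C_r$ have length $2r$, order ideals are encoded by $(a_1,\dots,a_{k-1})$ with $a_{r+1}\le a_r+2$, the subposet on $C_1\cup\cdots\cup C_{k-2}$ is isomorphic to $P_{k-1,2k-1}$, and the fiber over $J$ is $\{0,1,\dots,a_{k-2}(J)+2\}$.

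The gap is the claim that the auxiliary moments form a closed finite system. Summing a degree-$j$ polynomial in $a_{k-1}$ over the fiber $0\le a_{k-1}\le a_{k-2}(J)+2$ produces a degree-$(j+1)$ polynomial in $a_{k-2}(J)$. Since $\sigma(I)-\binom{|I|}{2}$ is quadratic in $a_{k-1}$ (with a $-|J|\,a_{k-1}$ cross term), computing the main sum for $k$ requires $\sum_J a_{k-2}(J)^3$; recursing on that statistic requires $\sum_{J'}a_{k-3}(J')^4$; and so on. The degree of the required moments grows by one at every application of the fibration, so no fixed finite list of moments recurses among itself, and the vector-valued recurrence you describe does not exist. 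To repair this you would have to track the full refined distribution of the top-chain size, e.g.\ the quantities $\#\{I: a_{k-1}(I)=t\}$, $\sum_{a_{k-1}(I)=t}|I|$, and $\sum_{a_{k-1}(I)=t}\bigl(\sigma(I)-\binom{|I|}{2}\bigr)$ as functions of both $k$ and $t$ (the first is a generalized ballot number with a product formula), guess two-variable closed forms for all of them, and verify them by a joint induction. That is a substantially heavier computation than the one you outline, and the guessing step is the real work; alternatively, the paper's minimal-element decomposition avoids the issue entirely because its recursions are convolutions whose generating-function system genuinely closes.
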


Let us begin by introducing some notation. For each integer $n$, let $P_n = P_{n, 2n + 1}$. Let $Q_n$ be the poset obtained by removing the minimal elements of $P_{n + 1}$; equivalently, $Q_n = P_{n + 1} \backslash \{ 1, 2, \ldots , n \}$. Figure 2 depicts the Hasse diagrams of $P_4$ and $Q_3$. Let $A_n$ denote the number of order ideals in $P_n$ and let $B_n$ denote the number of order ideals in $Q_n$. By a theorem of Bizley (see \cite{6}), 
\begin{flalign}
\label{expressiona}
A_n = \binom{3n + 1}{n} / (3n + 1).
\end{flalign} Furthermore, one may check that there is a poset isomorphism $Q_n \simeq P_{n + 1, 2n + 1}$ under the map sending $q \in Q_n$ to $(2n + 2) \lfloor q / (n + 1) \rfloor - q \in P_{n + 1, 2n + 1}$; applying the theorem of Bizley again yields that $B_n = \binom{3n + 2}{n + 1} / (3n + 2)$. 

Define the generating functions $A(x) = \sum_{k = 0}^{\infty} A_k x^k$ and $B(x) = \sum_{k = 0}^{\infty} B_k x^k$, where $x$ is a formal variable. It is known (see \cite{1, 2}) that these generating functions have explicit forms given by 
\begin{flalign*}
A(x) = \displaystyle\frac{2}{\sqrt{3x}} \sin \left( \frac{\arcsin (\sqrt{27x / 4})}{3} \right)
\end{flalign*}

\noindent and 
\begin{flalign}
\label{ab}
B(x) &= \displaystyle\frac{4}{3x} \sin^2 \left(\frac{\arcsin (\sqrt{27x / 4})}{3} \right) \nonumber \\
&= A(x)^2.  
\end{flalign}

\begin{figure}[t]
\begin{center}
\xymatrix @R=.75pc {& & & & \bullet_{23} \ar[d] \ar[ldd] \\
& & & & \bullet_{19} \ar[d] \ar[ldd] & & & & & \bullet_{23} \ar[d] \ar[ldd] \\
& & & \bullet_{14} \ar[d] \ar[ldd] & \bullet_{15} \ar[d] \ar[ldd] & & & & & \bullet_{19} \ar[d] \ar[ldd] \\
& & & \bullet_{10} \ar[d] \ar[ldd] & \bullet_{11} \ar[d] \ar[ldd] & & & & \bullet_{14} \ar[d] \ar[ldd] & \bullet_{15} \ar[d] \ar[ldd] \\
& & \bullet_5 \ar[d] & \bullet_6 \ar[d] & \bullet_7 \ar[d] & & & & \bullet_{10} \ar[d] & \bullet_{11} \ar[d] \\
& & \bullet_1 & \bullet_2 & \bullet_3 & & & \bullet_5 & \bullet_6 & \bullet_7 }
\caption{The Hasse diagrams of the posets $P_4$ and $Q_3$ are shown to the left and right, respectively.}
\end{center} 
\end{figure} 

\noindent From \cite{1} (or from \hyperref[ab]{(\ref*{ab})}), we have that 
\begin{flalign}
\label{relation2}
x A(x)^3 - A(x) + 1 = 0. 
\end{flalign}

\noindent For each $p \in P_n$, let $\rho_{P_n} (p) = \lfloor p / n \rfloor$; for each $q \in Q_n$, let $\rho_{Q_n} (q) = \rho_{P_{n + 1}} (q)$. For each $S \in \{ P, Q \}$, define the sums
\begin{flalign*}
T_n^{(S)} = \displaystyle\sum_{I \in J (S_n)} |I|; \quad R_n^{(S)} = \displaystyle\sum_{I \in J (S_n)} \displaystyle\sum_{i \in I} \rho_{S_n} (i); \quad G_n^{(S)} = \displaystyle\sum_{I \in J (S_n)} \left( \sigma(I) - \binom{|I|}{2} \right). 
\end{flalign*}

\noindent Also define the generating functions

\begin{flalign*}
T_S (x) = \displaystyle\sum_{k = 0}^{\infty} T_k^{(S)} x^k; \quad R_S (x) = \displaystyle\sum_{k = 0}^{\infty} R_k^{(S)} x^k; \quad G_S (x) = \displaystyle\sum_{k = 0}^{\infty} G_k^{(S)} x^k. 
\end{flalign*}

\noindent The equality \hyperref[sumideals2]{(\ref*{sumideals2})} is equivalent to 
\begin{flalign*}
G_n^{(P)} = \displaystyle\frac{n(n - 1)(3n + 2)}{12 (2n + 1)} \binom{3n}{n}. 
\end{flalign*}

\noindent Therefore, by \hyperref[expressiona]{(\ref*{expressiona})}, it suffices to show that 
\begin{flalign}
\label{function2}
G_P (x) = \displaystyle\frac{3 x^3 A''' (x) + 8 x^2 A'' (x)}{12}
\end{flalign}

\noindent in order to establish \hyperref[idealtheorem2]{Theorem \ref*{idealtheorem2}}. 

In order to prove \hyperref[function2]{(\ref*{function2})}, we will derive several recursions that yield algebraic relations between the generating functions $A$, $T$, $R$, and $G$. These relations will allow us to solve for $G$ as a rational function in $x$ and $A$ and thereby deduce the above equality. 

To obtain these recursions, we partition the sets $J (P_n)$ and $J (Q_n)$ in a way similar to that done by Stanley and Zanello in \cite{14}. For each integer $i \in [1, n]$, let $J_i (P_n) \subset J (P_n)$ denote the set of order ideals of $P_n$ that contain $\{ 1, 2, \ldots , i - 1 \}$ but not $i$. Similarly, for each integer $i \in [1, n + 1]$, let $J_i (Q_n)$ denote the set of order ideals of $Q_n$ that contain $\{ n + 2, n + 3, \ldots , n + i \}$ but not $n + i + 1$. One may refer to Figure 3 for examples. On the left is the Hasse diagram of $P_6$; any ideal in $J_3 (P_6) $ must contain the elements labelled by squares, must avoid the elements labelled by white circles, and may contain some of the elements labelled by black circles; the analogous figure for $J_4 (Q_5)$ is shown on the right. We have the decompositions $J (P_n) = \bigcup_{i = 1}^n J_i (P_n)$ and $J (Q_n) = \bigcup_{i = 1}^{n + 1} J_i (Q_n)$. 

For any integer $i \in [1, n]$, let $P_n (i) \subset P_n$ denote the poset of elements in $P_n$ that are greater than some integer in $[1, i - 1]$ and incomparable to each integer in $[i, n - 1]$, where the ordering is with respect to the poset $P_n$; for instance, $P_6 (3) = \{ 7, 8, 14, 20 \}$, as seen from Figure 3. Also let $P_n (-i)$ denote the poset of elements $p \in P_n$ that are incomparable to each integer in $[1, i]$; for instance, $P_6 (-3) = \{ 4, 5, 10, 11, 17, 23 \}$. Observe that there is a poset isomorphism $Q_{i - 1} \simeq P_n (i)$ under the map sending an $q \in Q_{i - 1}$ to $q + (n - i) \rho_{Q_{i - 1}} (q) \in P_n (i)$. Similarly, there is a poset isomorphism $P_{n - i} \simeq P_n (-i)$ that maps $p \in P_{n - i}$ to $p + i + i \rho_{P_{n - i}} (p) \in P_n (-i)$. 

For any integer $i \in [1, n]$, let $Q_n (i) \subset Q_n$ denote the poset of elements in $Q_n$ that are greater than some integer in $[n + 2, n + i]$ and incomparable to each integer in $[n + i + 1, 2n + 1]$, where the ordering is with respect to the poset $Q_n$; for instance, $Q_5 (4) = \{ 14, 15, 20, 21, 27, 33 \}$, as seen from Figure 3. Also let $Q_n (-i)$ denote the poset of elements in $Q_n$ that are incomparable to each integer in $[n + 2, n + i + 1]$; for instance, $Q_5 (-4) = \{ 11, 17 \}$. Observe that there is a poset isomorphism $P_{i - 1} \simeq Q_n (i)$ under the map that sends $p \in P_{i - 1}$ to $p + 2n + 3 + (n + 2 - i) \rho_{P_{i - 1}} (p) \in Q_n (i)$. Similarly, there is a poset isomorphism $P_{n - i + 1} \simeq Q_n (-i)$ that maps $p \in P_{n - i + 1}$ to $p + n + 1 + i + i \rho_{P_{n - i + 1}} (p) \in Q_n (-i)$. 

We will now deduce the following recursive identities. 

\begin{figure}[t]
\begin{center}
\xymatrix @R=.75pc {& & & & \circ_{59} \ar[d] \ar[ldd] \\ 
& & & & \circ_{53} \ar[d] \ar[ldd] & & & & & \circ_{59} \ar[d] \ar[ldd]  \\ 
& & & \circ_{46} \ar[d] \ar[ldd] & \circ_{47} \ar[d] \ar[ldd] & & & & & \circ_{53} \ar[d] \ar[ldd] \\
& & & \circ_{40} \ar[d] \ar[ldd] & \circ_{41} \ar[d] \ar[ldd] & & & & \circ_{46} \ar[d] \ar[ldd] & \circ_{47} \ar[d] \ar[ldd] \\ 
& & \circ_{33} \ar[d] \ar[ldd] & \circ_{34} \ar[d] \ar[ldd] & \circ_{35} \ar[d] \ar[ldd] & & & & \circ_{40} \ar[d] \ar[ldd] & \circ_{41} \ar[d] \ar[ldd] \\
& & \circ_{27} \ar[d] \ar[ldd] & \circ_{28} \ar[d] \ar[ldd] & \circ_{29} \ar[d] \ar[ldd] & & & \bullet_{33} \ar[d] \ar[ldd] & \circ_{34} \ar[d] \ar[ldd] & \circ_{35} \ar[d] \ar[ldd] \\ 
& \bullet_{20} \ar[d] \ar[ldd] & \circ_{21} \ar[d] \ar[ldd] & \circ_{22} \ar[d] \ar[ldd] & \bullet_{23} \ar[d] \ar[ldd] & & & \bullet_{27} \ar[d] \ar[ldd] & \circ_{28} \ar[d] \ar[ldd] & \circ_{29} \ar[d] \ar[ldd] \\
& \bullet_{14} \ar[d] \ar[ldd] & \circ_{15} \ar[d] \ar[ldd] & \circ_{16} \ar[d] \ar[ldd] & \bullet_{17} \ar[d] \ar[ldd] & & \bullet_{20} \ar[d] \ar[ldd] & \bullet_{21} \ar[d] \ar[ldd] & \circ_{22} \ar[d] \ar[ldd] & \circ_{23} \ar[d] \ar[ldd] \\
\bullet_7 \ar[d] & \bullet_8 \ar[d] & \circ_9 \ar[d] & \bullet_{10} \ar[d] & \bullet_{11} \ar[d] & & \bullet_{14} \ar[d] & \bullet_{15} \ar[d] & \circ_{16} \ar[d] & \bullet_{17} \ar[d] \\
\square_1 & \square_2 & \circ_3 & \bullet_4 & \bullet_5 & \square_7 & \square_8 & \square_9 & \circ_{10} & \bullet_{11}}
\caption{The Hasse diagram of $P_6$ is on the left. Any order ideal in $J_3 (P_6)$ must avoid the elements labelled by white circles, must contain elements labelled by squares, and might contain some of the elements labelled by black circles. A similar diagram for $J_4 (Q_5)$ is on the right.}
\end{center}
\end{figure} 

\begin{prop}
\label{tproposition}
For each integer $n \ge 0$,
\begin{flalign}
\label{tprecursion}
T_n^{(P)} = \displaystyle\sum_{i = 0}^{n - 1} \big( A_{n - i - 1} T_i^{(Q)} + i B_i A_{n - i - 1} + B_i T_{n - i - 1}^{(P)} \big)
\end{flalign}

\noindent and 
\begin{flalign}
\label{tqrecursion}
T_n^{(Q)} = \displaystyle\sum_{i = 0}^n \big( A_{n - i} T_i^{(P)} +  i A_i A_{n - i} + A_i T_{n - i}^{(P)} \big). 
\end{flalign}
\end{prop}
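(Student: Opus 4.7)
My plan is to partition the sets $J(P_n)$ and $J(Q_n)$ according to the smallest minimal element that is omitted from the ideal, then use the decompositions of the remaining subposets that are implicit in the author's definitions of $P_n(\pm i)$ and $Q_n(\pm i)$. Specifically, for \hyperref[tprecursion]{(\ref*{tprecursion})}, I would fix $i \in [1, n]$ and consider $I \in J_i (P_n)$; since $I$ contains $\{1, \ldots, i - 1\}$ but not $i$, it also omits every $p \in P_n$ with $p \ge i$, so the free part of $I$ is an order ideal in the subposet $S_i := P_n \setminus (\{1, \ldots, i - 1\} \cup \{p \in P_n : p \ge i\})$. The key claim is that $S_i = P_n (i) \sqcup P_n (-i)$, with no comparabilities between the two summands.

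The containments $P_n (i), P_n (-i) \subseteq S_i$ and the mutual incomparability are immediate from the definitions. For the reverse containment I would establish the following interval property of $P_{n, 2n + 1}$: the set of minimal elements of $P_n$ lying weakly below any fixed $p \in P_n$ consists of consecutive integers in $\{1, 2, \ldots, n - 1\}$. Given this, any $p \in S_i$ has such an interval that excludes $i$, so the interval lies in $[1, i - 1]$ or in $[i + 1, n - 1]$, placing $p$ in $P_n (i)$ or $P_n (-i)$ respectively. The interval property itself follows from the explicit gap parametrization $P_n = \{r + jn : 1 \le r \le n - 1,\ 0 \le j \le 2r - 1\}$ together with the comparability criterion that $p = r + jn$ satisfies $p \ge c$ (for $c \in [1, n - 1]$) precisely when either $r \ge c$ and $j \ge 2(r - c)$, or $r < c$ and $j \ge 2(n + r - c) + 1$; in both regimes the valid values of $c$ form an interval of integers.

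Once the decomposition of $S_i$ is in place, each $I \in J_i (P_n)$ corresponds bijectively to a pair $(I_1, I_2) \in J(P_n(i)) \times J(P_n(-i))$ via $I = \{1, \ldots, i - 1\} \cup I_1 \cup I_2$, with $|I| = (i - 1) + |I_1| + |I_2|$. Invoking the isomorphisms $Q_{i - 1} \simeq P_n (i)$ and $P_{n - i} \simeq P_n (-i)$ and summing over $J_i (P_n)$ yields
\begin{flalign*}
\sum_{I \in J_i (P_n)} |I| = (i - 1) B_{i - 1} A_{n - i} + T_{i - 1}^{(Q)} A_{n - i} + B_{i - 1} T_{n - i}^{(P)};
\end{flalign*}
summing over $i = 1, \ldots, n$ and reindexing $j = i - 1$ recovers \hyperref[tprecursion]{(\ref*{tprecursion})}. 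The identical argument applied to $Q_n$ — using the partition $J(Q_n) = \bigsqcup_{i = 1}^{n + 1} J_i (Q_n)$, the analogous decomposition of $Q_n \setminus (\{n + 2, \ldots, n + i\} \cup \{q \in Q_n : q \ge n + i + 1\})$ as $Q_n (i) \sqcup Q_n (-i)$, and the isomorphisms $P_{i - 1} \simeq Q_n (i)$ and $P_{n - i + 1} \simeq Q_n (-i)$ — yields
\begin{flalign*}
\sum_{I \in J_i (Q_n)} |I| = (i - 1) A_{i - 1} A_{n - i + 1} + T_{i - 1}^{(P)} A_{n - i + 1} + A_{i - 1} T_{n - i + 1}^{(P)},
\end{flalign*}
and summing over $i = 1, \ldots, n + 1$ with the same reindexing produces \hyperref[tqrecursion]{(\ref*{tqrecursion})}. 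The substantive step is the subposet decomposition of $S_i$ (and its $Q_n$ counterpart), which rests on the interval property; everything else is routine bookkeeping.
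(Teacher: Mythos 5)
Your proof is correct and follows essentially the same route as the paper: partition $J(P_n)$ into the classes $J_i(P_n)$, split each ideal as $\{1,\ldots,i-1\} \cup I_1 \cup I_2$ with $I_1, I_2$ ranging independently over $J(P_n(i))$ and $J(P_n(-i))$, and transport via the isomorphisms $P_n(i) \simeq Q_{i-1}$ and $P_n(-i) \simeq P_{n-i}$ (and the analogous steps for $Q_n$). The only difference is that you explicitly justify the decomposition $S_i = P_n(i) \sqcup P_n(-i)$ via the interval property of minimal elements below a given $p$ --- a point the paper asserts without proof --- and your verification of that property (via the parametrization $p = r + jn$, $0 \le j \le 2r-1$, under which the interval always ends at $r$) is sound.
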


\begin{proof}
Let us first verify \hyperref[tprecursion]{(\ref*{tprecursion})}. Suppose that $i \in [1, n]$ is some integer; let $I \in J_i (P_n)$ be an order ideal. Then $I$ can be partitioned as the disjoint union $\{ 1, 2, \ldots , i - 1 \} \cup I_1 \cup I_2$, where $I_1 = I \cap P_n (i)$ and $I_2 = I \cap P_n (-i)$. Since $P_n (i) \simeq Q_{i - 1}$ and $P_n (-i) \simeq P_{n - i}$, we have that 
\begin{flalign*}
T_n^{(P)} &= \displaystyle\sum_{i = 1}^n \displaystyle\sum_{I \in J_i (P_n)} |I| \\
&= \displaystyle\sum_{i = 1}^n \displaystyle\sum_{I_1 \in J (Q_{i - 1})} \displaystyle\sum_{I_2 \in J (P_{n - i})} (i - 1 + |I_1| + |I_2|) \\
&= \displaystyle\sum_{i = 1}^n \big( T_{i - 1}^{(Q)} \big| J (P_{n - i}) \big| + (i - 1) \big| J(Q_{i - 1}) \big| \big| J (P_{n - i}) \big| + T_{n - i}^{(P)} \big| J(Q_{i - 1}) \big| \big),
\end{flalign*}

\noindent which implies \hyperref[tprecursion]{(\ref*{tprecursion})} since $\big| J (P_{n - i}) \big| = A_{n - i}$ and $\big| J(Q_{i - 1}) \big| = B_{i - 1}$. 

The proof of \hyperref[tqrecursion]{(\ref*{tqrecursion})} is analogous. Suppose that $i \in [1, n + 1]$ is an integer and let $I \in J_i (Q_n)$ be an order ideal. Then $I$ may be partitioned as the disjoint union $\{ n + 2, n + 3, \ldots , n + i \} \cup I_1 \cup I_2$, where $I_1 = I \cap Q_n (i)$ and $I_2 = I \cap Q_n (-i)$. Since $Q_n (i) \simeq P_{i - 1}$ and $Q_n (-i) \simeq P_{n - i + 1}$, we have that 
\begin{flalign*}
T_n^{(Q)} &= \displaystyle\sum_{i = 1}^{n + 1} \displaystyle\sum_{I \in J_i (Q_n)} |I| \\
&= \displaystyle\sum_{i = 1}^n \displaystyle\sum_{I_1 \in J (P_{i - 1})} \displaystyle\sum_{I_2 \in J (P_{n - i + 1})} (i - 1 + |I_1| + |I_2|) \\
&= \displaystyle\sum_{i = 1}^n \big( T_{i - 1}^{(P)} \big| J (P_{n - i + 1}) \big| + (i - 1) \big| J (P_{i - 1}) \big| \big| J(P_{n - i + 1}) \big| + T_{n - i + 1}^{(P)} \big| J (P_{i - 1}) \big| \big),
\end{flalign*}

\noindent which implies \hyperref[tqrecursion]{(\ref*{tqrecursion})}. 
\end{proof}

\noindent This yields a linear system of equations for the generating functions $T_P (x)$ and $T_Q (x)$ that can be solved explicitly. 

\begin{cor}
We have that 
\begin{flalign}
\label{tp}
T_P (x) = \displaystyle\frac{3x^2 A' (x)^2}{A(x)}
\end{flalign}

\noindent and 
\begin{flalign}
\label{tqrelation}
T_Q (x) = 2 A(x) T_P (x) + x A' (x) A(x). 
\end{flalign} 

\noindent Moreover, 
\begin{flalign}
\label{tqderivative}
T_Q' (x) = 2 A' (x) T_P (x) + 2 A(x) T_P' (x) + A' (x) A(x) + x A'' (x) A(x) + x A' (x)^2. 
\end{flalign}
\end{cor}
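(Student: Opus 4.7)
The plan is to convert each of the two convolution recursions in Proposition~\ref{tproposition} into an identity among generating functions, then solve the resulting $2\times 2$ linear system for $T_P(x)$ and $T_Q(x)$, using the algebraic identities $B(x)=A(x)^2$ and $xA(x)^3-A(x)+1=0$ from \eqref{ab} and \eqref{relation2}.

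Concretely, observe that $\sum_{n\ge 0}\big(\sum_{i=0}^{n-1} i B_i A_{n-i-1}\big)x^n = x^2 A(x)B'(x)$ and analogously $\sum_{n\ge 0}\big(\sum_{i=0}^n i A_i A_{n-i}\big)x^n = xA(x)A'(x)$, since $\sum_i iC_i x^i = xC'(x)$ for any series $C$, and multiplication of generating functions corresponds to convolution. Applied to \eqref{tprecursion} and \eqref{tqrecursion}, this yields
\begin{flalign*}
T_P(x) &= xA(x)T_Q(x) + x^2 A(x) B'(x) + xB(x)T_P(x), \\
T_Q(x) &= 2A(x)T_P(x) + xA(x)A'(x),
\end{flalign*}
the second of which is exactly \eqref{tqrelation}. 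Substituting $B(x)=A(x)^2$, so that $B'(x)=2A(x)A'(x)$, and inserting the expression for $T_Q(x)$ into the first equation, the terms collect to give
\begin{flalign*}
\bigl(1 - 3xA(x)^2\bigr)T_P(x) = 3x^2 A(x)^2 A'(x).
\end{flalign*}

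The only nontrivial step is simplifying the coefficient $1-3xA(x)^2$. Differentiating \eqref{relation2} gives $A(x)^3 + 3xA(x)^2 A'(x) - A'(x) = 0$, which rearranges to $1-3xA(x)^2 = A(x)^3/A'(x)$. Dividing the previous display by this quantity yields $T_P(x)=3x^2 A'(x)^2/A(x)$, which is \eqref{tp}. Finally, \eqref{tqderivative} is obtained by differentiating both sides of \eqref{tqrelation} with respect to $x$ and applying the product rule. I do not foresee any real obstacle; the one place care is needed is keeping track of the index shifts ($n-i-1$ versus $n-i$) when translating the two recursions into generating-function form, since these govern the extra factors of $x$ that appear in the linear system.
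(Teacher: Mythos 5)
Your proof is correct and follows essentially the same route as the paper: both convert the two recursions of Proposition~\ref{tproposition} into the linear system $T_P = xAT_Q + x^2AB' + xBT_P$, $T_Q = 2AT_P + xAA'$, use $B = A^2$ to solve for $T_P$, and then invoke the relation $1 - 3xA(x)^2 = A(x)^3/A'(x)$ (the paper's \eqref{derivative2}, which you rederive by differentiating \eqref{relation2}) to reach the stated form, with \eqref{tqderivative} obtained by differentiation. The only cosmetic difference is that the paper cites \eqref{derivative2} as a forward reference while you derive it on the spot.
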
 

\begin{proof}
The relation \hyperref[tqrelation]{(\ref*{tqrelation})} follows from \hyperref[tqrecursion]{(\ref*{tqrecursion})}. Differentiating \hyperref[tqrelation]{(\ref*{tqrelation})} yields \hyperref[tqderivative]{(\ref*{tqderivative})}. From \hyperref[tprecursion]{(\ref*{tprecursion})}, we deduce that
\begin{flalign}
\label{tprelation}
T_P (x) = x A(x) T_Q (x) + x^2 B' (x) A(x) + x B(x) T_P (x). 
\end{flalign}

\noindent By \hyperref[ab]{(\ref*{ab})}, $B' (x) = 2 A' (x) A(x)$; thus, inserting \hyperref[tqrelation]{(\ref*{tqrelation})} into \hyperref[tprelation]{(\ref*{tprelation})} yields 
\begin{flalign*}
T_P (x) = \displaystyle\frac{3x^2 A' (x) A(x)^2}{1 - 3 x A(x)^2}. 
\end{flalign*}

\noindent Applying \hyperref[derivative2]{(\ref*{derivative2})} to the above equality yields \hyperref[tp]{(\ref*{tp})}. 
\end{proof}

\noindent We may use a similar method to evaluate $R_P (x)$ and $R_Q (x)$. 

\begin{prop}
\label{rproposition}
For each integer $n \ge 0$, 

\begin{flalign}
\label{rprecursion}
R_n^{(P)} = \displaystyle\sum_{i = 0}^{n - 1} \big( A_{n - i - 1} R_i^{(Q)} + B_i R_{n - i - 1}^{(P)} \big)
\end{flalign}

\noindent and 

\begin{flalign}
\label{rqrecursion}
R_n^{(Q)} = \displaystyle\sum_{i = 0}^n \big( A_{n - i} (R_i^{(P)} + 2 T_i^{(P)}) + i A_i A_{n - i} + A_i (R_{n - i}^{(P)} + T_{n - i}^{(P)}) \big). 
\end{flalign}
\end{prop}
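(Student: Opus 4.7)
The plan is to mirror the proof of \hyperref[tproposition]{Proposition \ref*{tproposition}} essentially verbatim: use the decompositions $J(P_n) = \bigcup_{i = 1}^n J_i(P_n)$ and $J(Q_n) = \bigcup_{i = 1}^{n + 1} J_i(Q_n)$, and for each summand write $I = \{1, \ldots, i - 1\} \cup I_1 \cup I_2$ (respectively $I = \{n + 2, \ldots, n + i\} \cup I_1 \cup I_2$), then invoke the four poset isomorphisms $P_n(i) \simeq Q_{i - 1}$, $P_n(-i) \simeq P_{n - i}$, $Q_n(i) \simeq P_{i - 1}$, and $Q_n(-i) \simeq P_{n - i + 1}$ set up in the excerpt. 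The only new ingredient is that $R$ weights each element of $I$ by $\rho$ rather than by $1$, so I must understand how $\rho$ transforms under each of the four isomorphisms and what values $\rho$ takes on the required fixed elements.

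For the shifts, I would perform the following direct floor-function computations. Writing $q \in Q_{i - 1} \subseteq P_i$ as $q = ai + r$ with $0 \le r < i$ and $\rho_{Q_{i - 1}}(q) = a$, its image in $P_n(i)$ equals $q + (n - i)a = na + r$, so $\rho_{P_n}$ of the image is again $a$; an identical calculation shows $P_n(-i) \simeq P_{n - i}$ also preserves $\rho$. For the two maps into $Q_n \subseteq P_{n + 1}$, expanding $p + 2n + 3 + (n + 2 - i) \rho_{P_{i - 1}}(p)$ and $p + n + 1 + i + i \rho_{P_{n - i + 1}}(p)$ and reducing modulo $n + 1$ shows that $\rho_{Q_n}$ of the image exceeds $\rho_{P_{i - 1}}(p)$ (respectively $\rho_{P_{n - i + 1}}(p)$) by exactly $2$ (respectively $1$). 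Finally, for $k \in [1, i - 1]$ one has $\rho_{P_n}(k) = \lfloor k / n \rfloor = 0$, while for $k \in [n + 2, n + i]$ one has $\rho_{Q_n}(k) = \lfloor k / (n + 1) \rfloor = 1$.

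With these four shift constants and the two fixed-element contributions in hand, assembly is mechanical. The fixed elements of $J_i(P_n)$ contribute $0$, which is why \hyperref[rprecursion]{(\ref*{rprecursion})} has no analogue of the $iA_iA_{n - i}$ term, while the $I_1$ and $I_2$ contributions come out to $A_{n - i} R_{i - 1}^{(Q)}$ and $B_{i - 1} R_{n - i}^{(P)}$ since the relevant isomorphisms preserve $\rho$; reindexing $i \mapsto i + 1$ yields \hyperref[rprecursion]{(\ref*{rprecursion})}. For $J_i(Q_n)$, the fixed elements contribute $(i - 1) A_{i - 1} A_{n - i + 1}$, which reindexes to the $iA_iA_{n - i}$ term, and the $+2$ and $+1$ shifts convert the $I_1, I_2$ contributions into $A_{n - i + 1}(R_{i - 1}^{(P)} + 2T_{i - 1}^{(P)}) + A_{i - 1}(R_{n - i + 1}^{(P)} + T_{n - i + 1}^{(P)})$; summing over $i$ and shifting the index produces \hyperref[rqrecursion]{(\ref*{rqrecursion})}. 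The one step I would want to triple-check is the identification of the two shifts $+2$ and $+1$, since these constants are precisely what produce the $2T_i^{(P)}$ and $T_{n - i}^{(P)}$ terms that make \hyperref[rqrecursion]{(\ref*{rqrecursion})} an honest recursion coupling $R^{(Q)}$ to both $R^{(P)}$ and $T^{(P)}$.
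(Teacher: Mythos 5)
Your proposal is correct and follows the paper's proof essentially verbatim: the same decomposition $I = \{n+2,\ldots,n+i\}\cup I_1\cup I_2$ (resp.\ $\{1,\ldots,i-1\}\cup I_1\cup I_2$), the same four isomorphisms, and the same $\rho$-shift constants $+2$ and $+1$, which indeed check out (e.g.\ the image of $p = a(i-1)+r$ under $p\mapsto p+2n+3+(n+2-i)a$ is $(a+2)(n+1)+(r+1)$ with $0 < r+1 < n+1$). The only difference is that you make the floor-function verifications explicit where the paper simply asserts the shifted sums.
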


\begin{proof}
The proof is similar to that of \hyperref[tproposition]{Proposition \ref*{tproposition}}. Let us verify \hyperref[rqrecursion]{(\ref*{rqrecursion})} because the proof of \hyperref[rprecursion]{(\ref*{rprecursion})} is similar. Suppose that $i \in [1, n + 1]$ is an integer and let $I \in J_i (Q_n)$ be an order ideal. Then $I$ may be partitioned as the disjoint union $\{ n + 2, n + 3, \ldots , n + i \} \cup I_1 \cup I_2$, where $I_1 = I \cap Q_n (i)$ and $I_2 = I \cap Q_n (-i)$. Then, since $Q_n (i) \simeq P_{i - 1}$ and $Q_n (-i) \simeq P_{n - i + 1}$, we have that 
\begin{flalign*}
R_n^{(Q)} &= \displaystyle\sum_{i = 1}^{n + 1} \displaystyle\sum_{I \in J_i (Q_n)} \sum_{q \in I} \rho_{Q_n} (q) \\
&= \displaystyle\sum_{i = 1}^{n + 1} \displaystyle\sum_{I_1 \in J (P_{i - 1})} \displaystyle\sum_{I_2 \in J (P_{n - i + 1})} \left( i - 1 + \displaystyle\sum_{p_1 \in I_1} \big( \rho_{P_{i - 1}} (p_1) + 2 \big) + \displaystyle\sum_{p_2 \in I_2} \big( \rho_{P_{n - i + 1}} (p_2) + 1 \big) \right) \\
&= \displaystyle\sum_{i = 1}^{n + 1} \big( A_{n - i + 1} (R_{i - 1}^{(P)} + 2 T_{i - 1}^{(P)}) + (i - 1) A_{i - 1} A_{n - i + 1} + A_{i - 1} (R_{n - i + 1}^{(P)} + T_{n - i + 1}^{(P)}) \big), 
\end{flalign*}

\noindent which implies \hyperref[rqrecursion]{(\ref*{rqrecursion})}. 
\end{proof}

\begin{cor}
We have that 
\begin{flalign}
\label{rp}
R_P (x) = \displaystyle\frac{3 x A' (x) T_P (x) + x^2 A' (x)^2}{A(x)}
\end{flalign}

\noindent and 
\begin{flalign}
\label{rq}
R_Q (x) = 2 A(x) R_P (x) + 3 A(x) T_P (x) + x A' (x) A(x). 
\end{flalign}
\end{cor}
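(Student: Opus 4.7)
The plan is to convert the two recursions of Proposition \ref{rproposition} into identities between the generating functions $R_P(x)$, $R_Q(x)$, $T_P(x)$, and $A(x)$, and then solve the resulting linear system in exactly the way $T_P(x)$ and $T_Q(x)$ were derived after Proposition \ref{tproposition}. The translation of convolutions in $n$ into products of generating functions is the standard mechanism, so the work is essentially algebraic.

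First, I would multiply (\ref{rqrecursion}) by $x^n$ and sum over $n \ge 0$. The three pieces translate as follows: $\sum_{i=0}^n A_{n-i}(R_i^{(P)} + 2T_i^{(P)})$ contributes $A(x)\bigl(R_P(x) + 2T_P(x)\bigr)$; $\sum_{i=0}^n A_i(R_{n-i}^{(P)} + T_{n-i}^{(P)})$ contributes $A(x)\bigl(R_P(x) + T_P(x)\bigr)$; and $\sum_{i=0}^n iA_i A_{n-i}$ contributes $x A'(x) A(x)$, since $\sum_i i A_i x^i = x A'(x)$. Adding these three contributions gives (\ref{rq}) immediately.

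Next, I would apply the same procedure to (\ref{rprecursion}). Because the inner indices carry a shift of one, this produces the linear relation
\begin{flalign*}
R_P(x) = x A(x) R_Q(x) + x B(x) R_P(x).
\end{flalign*}
Substituting the expression (\ref{rq}) for $R_Q(x)$ that was just obtained, and using $B(x) = A(x)^2$ from (\ref{ab}), yields
\begin{flalign*}
R_P(x)\bigl(1 - 3 x A(x)^2\bigr) = 3 x A(x)^2 T_P(x) + x^2 A'(x) A(x)^2.
\end{flalign*}
Differentiating the algebraic relation (\ref{relation2}) gives $A'(x)\bigl(1 - 3x A(x)^2\bigr) = A(x)^3$, so $\bigl(1 - 3xA(x)^2\bigr)^{-1} = A'(x)/A(x)^3$; multiplying the displayed equation by this factor produces (\ref{rp}).

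I do not anticipate any real obstacle. The recursions of Proposition \ref{rproposition} share precisely the convolutional shape of those in Proposition \ref{tproposition}, so the proof reduces to careful bookkeeping of the generating-function translations and reuse of the two algebraic identities $B(x) = A(x)^2$ and $A'(x)\bigl(1 - 3x A(x)^2\bigr) = A(x)^3$ that have already appeared in the section.
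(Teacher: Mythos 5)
Your proposal is correct and follows the paper's proof essentially verbatim: you derive (\ref{rq}) directly from the convolution (\ref{rqrecursion}), obtain the relation $R_P(x) = xA(x)R_Q(x) + xB(x)R_P(x)$ from (\ref{rprecursion}), substitute and use $B(x) = A(x)^2$, and then apply (\ref{derivative2}) in the form $(1 - 3xA(x)^2)^{-1} = A'(x)/A(x)^3$ to reach (\ref{rp}). No gaps.
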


\begin{proof}
The relation \hyperref[rq]{(\ref*{rq})} follows from \hyperref[rqrecursion]{(\ref*{rqrecursion})}. From \hyperref[rprecursion]{(\ref*{rprecursion})}, we deduce that
\begin{flalign}
\label{rprelation}
R_P (x) = x A(x) R_Q (x) + x B(x) R_P (x). 
\end{flalign}

\noindent Inserting \hyperref[rq]{(\ref*{rq})} into \hyperref[rprelation]{(\ref*{rprelation})} and using \hyperref[ab]{(\ref*{ab})} yields that
\begin{flalign*}
R_P (x) = \displaystyle\frac{3x A(x)^2 T_P (x) + x^2 A' (x) A(x)^2}{1 - 3x A(x)^2}.
\end{flalign*}

\noindent Applying \hyperref[derivative2]{(\ref*{derivative2})} to the above gives \hyperref[rp]{(\ref*{rp})}. 
\end{proof}

\noindent We will now express $G_P (x)$ in terms of $A(x)$, $T_P (x)$, and $R_P (x)$. 

\begin{prop}
\label{gproposition}
For each integer $n \ge 0$,
\begin{flalign}
\label{sumidealsrecursion1} 
G_n^{(P)} &= \displaystyle\sum_{i = 0}^{n - 1} \Big( A_{n - i - 1} \big( G_i^{(Q)} + (n - i - 1) R_i^{(Q)} - i T_i^{(Q)} \big) \nonumber \\
& \qquad + B_i \big( G_{n - i - 1}^{(P)} + (i + 1) R_{n - i - 1}^{(P)} + T_{n - i - 1}^{(P)} \big) \nonumber \\
& \qquad + i B_i A_{n - i - 1} - T_i^{(Q)} T_{n - i - 1}^{(P)} \Big)
\end{flalign}

\noindent and 
\begin{flalign}
\label{sumidealsrecursion2}
G_n^{(Q)} &= \displaystyle\sum_{i = 0}^n \Big( A_{n - i} \big(G_i^{(P)} + (n - i + 1) R_i^{(P)} + (2n + 3 - i) T_i^{(P)} \big) \nonumber \\
& \qquad + A_i \big( G_{n - i}^{(P)} + (i + 1) R_{n - i}^{(P)} + (n + 2) T_{n - i}^{(P)} \big) \nonumber \\
& \qquad + (n + 2) i A_i A_{n - i} - T_i^{(P)} T_{n - i}^{(P)} \Big). 
\end{flalign}
\end{prop}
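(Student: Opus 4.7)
The plan is to mirror the proofs of Propositions \ref{tproposition} and \ref{rproposition}: partition $J(P_n) = \bigsqcup_{i=1}^n J_i(P_n)$ and $J(Q_n) = \bigsqcup_{i=1}^{n+1} J_i(Q_n)$, write each $I \in J_i(P_n)$ as the disjoint union $\{1, \ldots, i-1\} \sqcup I_1 \sqcup I_2$ with $I_1 \subset P_n(i)$ and $I_2 \subset P_n(-i)$ (and analogously for $J_i(Q_n)$), pull $I_1, I_2$ back via the four explicit poset isomorphisms of the excerpt to ideals $I_1', I_2'$ in smaller $P$'s or $Q$'s, and sum. The genuinely new feature relative to the $T$ and $R$ propositions is that we must simultaneously control $\sigma(I)$ (into which the explicit shifts of the isomorphisms propagate) and the quadratic $\binom{|I|}{2}$ (whose expansion will produce a cross term).

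For (\ref{sumidealsrecursion1}), the isomorphisms $q \mapsto q + (n-i)\rho_{Q_{i-1}}(q)$ and $p \mapsto p + i + i\rho_{P_{n-i}}(p)$ give
\begin{align*}
\sigma(I_1) &= \sigma(I_1') + (n-i) \sum_{q \in I_1'} \rho_{Q_{i-1}}(q), \\
\sigma(I_2) &= \sigma(I_2') + i|I_2'| + i \sum_{p \in I_2'} \rho_{P_{n-i}}(p),
\end{align*}
while the forced chain contributes $\binom{i}{2}$ to $\sigma(I)$. Together with $|I| = (i-1) + |I_1'| + |I_2'|$, the expansion $\binom{|I|}{2} = \binom{i-1}{2} + \binom{|I_1'|}{2} + \binom{|I_2'|}{2} + (i-1)(|I_1'|+|I_2'|) + |I_1'||I_2'|$ and the identity $\binom{i}{2} - \binom{i-1}{2} = i - 1$ imply that the summand $\sigma(I) - \binom{|I|}{2}$ separates into the two inner statistics $\sigma(I_j') - \binom{|I_j'|}{2}$, into linear $\rho$- and size-sums, and into the single cross term $-|I_1'||I_2'|$. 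Summing over $(I_1', I_2') \in J(Q_{i-1}) \times J(P_{n-i})$ converts these pieces into $G$'s, $R$'s, $T$'s, and the product $-T_{i-1}^{(Q)} T_{n-i}^{(P)}$; the reindexing $j = i - 1$ then produces (\ref{sumidealsrecursion1}).

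The derivation of (\ref{sumidealsrecursion2}) is entirely parallel, using the isomorphisms $p \mapsto p + (2n+3) + (n+2-i)\rho_{P_{i-1}}(p)$ and $p \mapsto p + (n+1+i) + i\rho_{P_{n-i+1}}(p)$ together with the fact that the forced chain $\{n+2, \ldots, n+i\}$ contributes $(i-1)(n+1) + \binom{i}{2}$ to $\sigma(I)$; combining the linear shifts $(2n+3)|I_1'|$ and $(n+1+i)|I_2'|$ with the $-(i-1)(|I_1'|+|I_2'|)$ from $\binom{|I|}{2}$ and reindexing produces exactly the coefficients $(2n+3-i)$, $(n-i+1)$, $(n+2)$, $(i+1)$, and $(n+2)i$ in the statement. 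The main obstacle I anticipate is purely bookkeeping: seven interacting linear and quadratic contributions must be tracked in parallel, and I would expect the bulk of the effort to be verifying coefficient by coefficient that, after reindexing, everything collapses without leftover onto the displayed combinations of $G^{(S)}$, $R^{(S)}$, $T^{(S)}$, $A_n$, and $B_n$.
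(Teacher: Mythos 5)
Your proposal is correct and follows essentially the same route as the paper: the same decomposition $I = \{\cdot\} \sqcup I_1 \sqcup I_2$, the same four poset isomorphisms with their affine shifts tracked through $\sigma(I)$, and the same expansion of $\binom{|I_1|+|I_2|+i-1}{2}$ producing the cross term $-|I_1||I_2|$ that yields $-T_i T_{n-i}$. The paper writes out only the $G_n^{(Q)}$ recursion in detail and declares the other similar, but your coefficient bookkeeping (e.g.\ $\binom{i}{2}-\binom{i-1}{2}=i-1$ and $(i-1)(n+1)+(i-1)=(i-1)(n+2)$) matches its computation exactly.
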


\begin{proof}
Again the proof is similar to the proofs of \hyperref[tproposition]{Proposition \ref*{tproposition}} and \hyperref[rproposition]{Proposition \ref*{rproposition}}. We will verify \hyperref[sumidealsrecursion2]{(\ref*{sumidealsrecursion2})} since the proof of \hyperref[sumidealsrecursion1]{(\ref*{sumidealsrecursion1})} is similar. Let $i \in [1, n + 1]$ be an integer and $I \in J_i (Q_n)$ be an order ideal. Using the decomposition $I = \{ n + 2, n + 3, \ldots , n + i \} \cup I_1 \cup I_2$ as before (and the isomorphisms $Q_n (i) \simeq P_{i - 1}$ and $Q_n (-i) \simeq P_{n - i + 1}$), one may check that 
\begin{flalign}
\label{sigmasum2}
\displaystyle\sum_{I \in J (Q_n)} \sigma(I) &= \displaystyle\sum_{i = 1}^{n + 1} \displaystyle\sum_{I_1 \in J (P_{i - 1})} \displaystyle\sum_{I_2 \in J (P_{n - i + 1})} \Bigg( \displaystyle\sum_{j = n + 2}^{n + i} j + \displaystyle\sum_{p_1 \in I_1} \big( p_1 + (n + 2 - i) \rho_{P_{i - 1}} (p_1) + 2n + 3 \big) \nonumber \\
& \qquad \qquad \qquad \qquad \qquad \qquad \quad + \displaystyle\sum_{p_2 \in I_2} \big( p_2 + i \rho_{P_{n - i + 1}} (p_2) + n + i + 1 \big) \Bigg). 
\end{flalign}

\noindent Furthermore, 
\begin{flalign}
\label{lengthsum2}
\displaystyle\sum_{I \in J(Q_n)} \binom{|I|}{2} &= \displaystyle\sum_{i = 1}^{n + 1} \displaystyle\sum_{I_1 \in J (P_{i - 1})} \displaystyle\sum_{I_2 \in J (P_{n - i + 1})} \binom{|I_1| + |I_2| + i - 1}{2} \nonumber \\
&= \displaystyle\sum_{i = 1}^{n + 1} \displaystyle\sum_{I_1 \in J (P_{i - 1})} \displaystyle\sum_{I_2 \in J (P_{n - i + 1})} \Bigg( \binom{|I_1|}{2} + \binom{|I_2|}{2} + |I_1| |I_2| + (i - 1) |I_1| \nonumber \\
& \qquad \qquad \qquad \qquad \qquad \qquad \quad + (i - 1) |I_2| + \binom{i - 1}{2} \bigg). 
\end{flalign}

\noindent Subtracting \hyperref[lengthsum2]{(\ref*{lengthsum2})} from \hyperref[sigmasum2]{(\ref*{sigmasum2})} yields 
\begin{flalign*}
G_n^{(Q)} &= \displaystyle\sum_{i = 1}^{n + 1} \displaystyle\sum_{I_1 \in J (P_{i - 1})} \displaystyle\sum_{I_2 \in J (P_{n - i + 1})} \Bigg( (i - 1)(n + 2) + \displaystyle\sum_{p_1 \in I_1} \big( (n + 2 - i) \rho_{P_{i - 1}} (p_1) + 2n + 4 - i \big) + \sigma(I_1) \\
& \qquad \qquad \qquad \qquad \qquad - \binom{|I_1|}{2} + \displaystyle\sum_{p_2 \in I_2} \big( i \rho_{P_{n - i + 1}} (p_2) + n + 2 \big) + \sigma(I_2) - \binom{|I_2|}{2} - |I_1| |I_2| \Bigg), 
\end{flalign*} 

\noindent which implies \hyperref[sumidealsrecursion2]{(\ref*{sumidealsrecursion2})}. 
\end{proof}

\begin{cor}
We have that 
\begin{flalign}
\label{g2}
G_P (x) &= \big( 3x A(x)^2 R_P (x) + 6 x^2 A' (x) A(x) R_P (x) + 6 x A(x)^2 T_P (x) \nonumber \\
& \quad + 3 x^2 A' (x) A(x) T_P (x) + 4 x^2 A' (x) A(x)^2 + x^3 A' (x)^2 A(x) \nonumber \\
& \quad - 3 x A(x) T_P (x)^2 \big) (1 - 3x A(x)^2)^{-1}. 
\end{flalign}
\end{cor}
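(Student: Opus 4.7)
The plan mirrors the derivations of the two preceding corollaries: translate the recursions (2.14) and (2.15) of Proposition 2.5 into generating function identities, eliminate $G_Q(x)$, and solve for $G_P(x)$. The key dictionary is standard: a convolution $\sum_i f_i g_{n-i}$ becomes the product $f(x) g(x)$; an index factor $i$ or $n-i$ inside such a convolution becomes $x \frac{d}{dx}$ applied to the corresponding factor; and an overall shift $n+c$ is handled via $\sum_n (n+c) c_n x^n = x h'(x) + c h(x)$, where $h(x) = \sum_n c_n x^n$.

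Applying this dictionary term by term to (2.15), and substituting (2.8), (2.9), and (2.12) for $T_Q$, $T_Q'$, and $R_Q$, yields an expression for $G_Q(x)$ as a polynomial in $G_P$, $R_P$, $R_P'$, $T_P$, $T_P'$, $T_P^2$, $A$, $A'$, $A''$, and $x$. Doing the same to (2.14), whose inner sums run from $0$ to $n-1$ and therefore contribute an overall factor of $x$, and then substituting $B = A^2$ and $B' = 2 A A'$ from (2.5), the formulas for $T_Q$, $T_Q'$, $R_Q$, and the just-derived expression for $G_Q$, produces a single linear equation for $G_P(x)$. The coefficient of $G_P$ on the left-hand side, after collecting contributions from both the $x B(x) G_P(x)$ piece of (2.14) and the $2 A(x) G_P(x)$ piece of $G_Q(x)$ brought in by the $x A(x) G_Q(x)$ term, simplifies to $1 - 3 x A(x)^2$, exactly as in the derivations of (2.11) and (2.13).

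Solving this equation then gives (2.16). Along the way, higher-derivative and cross terms such as $x^2 A'' A$, $x R_P'$, and $T_P T_Q$ should cancel or collapse after the substitutions, leaving only the seven monomials in the stated numerator; the $-3 x A T_P^2$ term arises from the $-T_i^{(Q)} T_{n - i - 1}^{(P)}$ piece of (2.14) together with the substitution (2.8) for $T_Q$. The main obstacle is the bookkeeping: the compound coefficients $(n - i - 1) R_i^{(Q)}$, $(i + 1) R_{n - i - 1}^{(P)}$, $(2n + 3 - i) T_i^{(P)}$, and $(n + 2) T_{n - i}^{(P)}$ each split into several pieces under translation to generating functions, so the primary risk in executing the plan is arithmetic error rather than conceptual difficulty.
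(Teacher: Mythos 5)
Your proposal follows the paper's proof essentially verbatim: translate \hyperref[sumidealsrecursion1]{(\ref*{sumidealsrecursion1})} and \hyperref[sumidealsrecursion2]{(\ref*{sumidealsrecursion2})} into generating-function identities, insert the expression for $G_Q(x)$ into the one for $G_P(x)$, use $B = A^2$ and the formulas \hyperref[tqrelation]{(\ref*{tqrelation})}, \hyperref[tqderivative]{(\ref*{tqderivative})}, \hyperref[rq]{(\ref*{rq})} for $T_Q$, $T_Q'$, $R_Q$, and solve the resulting linear equation whose $G_P$-coefficient is $1 - 3xA(x)^2$. One small bookkeeping quibble: the $-3xA(x)T_P(x)^2$ term collects $-2xA T_P^2$ from the $-xT_PT_Q$ term together with a further $-xAT_P^2$ contributed by the $-T_P^2$ term inside $xA(x)G_Q(x)$, not solely from the former.
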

\begin{proof}
From \hyperref[sumidealsrecursion2]{(\ref*{sumidealsrecursion2})} and \hyperref[sumidealsrecursion1]{(\ref*{sumidealsrecursion1})}, we deduce that 
\begin{flalign*}
G_Q (x) &= 2 A(x) G_P (x) + 2 A(x) R_P (x) + 2 x A' (x) R_P (x) + 5 A(x) T_P (x) + 3 x A' (x) T_P (x) \\
& \quad + 2 x A(x) T_P' (x) + 3x A' (x) A(x) + x^2 A' (x)^2 + x^2 A'' (x) A(x) - T_P (x)^2 
\end{flalign*}

\noindent and 
\begin{flalign*}
G_P (x) &= x A(x) G_Q (x) + x^2 A' (x) R_Q (x) - x^2 A(x) T_Q' (x) + x B(x) G_P (x) + x B(x) R_P (x) \\
& \quad + x^2 B' (x) R_P (x) + x B(x) T_P (x) + x^2 B' (x) A(x) - x T_P (x) T_Q (x), 
\end{flalign*}

\noindent respectively. Inserting the first equality above into the second and using \hyperref[ab]{(\ref*{ab})} gives 
\begin{flalign*}
G_P (x) &= 3 x A(x)^2 G_P (x) + 3 x A(x)^2 R_P (x) + 4 x^2 A' (x) A(x) R_P (x) + x^2 A' (x) R_Q (x) \\
& \quad + 6 x A(x)^2 T_P (x) + 3 x^2 A' (x) A(x) T_P (x) + 2 x^2 A(x)^2 T_P' (x) - x^2 A(x) T_Q' (x) \\
& \quad + 5 x^2 A' (x) A(x)^2  + x^3 A' (x)^2 A(x) + x^3 A'' (x) A(x)^2 - x A(x) T_P (x)^2 - x T_P (x) T_Q (x). 
\end{flalign*}

\noindent Applying \hyperref[tqrelation]{(\ref*{tqrelation})}, \hyperref[tqderivative]{(\ref*{tqderivative})}, and \hyperref[rq]{(\ref*{rq})} to the above yields \hyperref[g2]{(\ref*{g2})}. 
\end{proof}

\noindent We may now prove \hyperref[idealtheorem2]{Theorem \ref*{idealtheorem2}}. 

\begin{proof}[Proof of Theorem 2.1] 
As noted previously, it suffices to establish \hyperref[function2]{(\ref*{function2})}. The right side of this equality involves derivatives of $A(x)$. We can express these in terms of $A(x)$ using \hyperref[relation2]{(\ref*{relation2})}. Specifically, differentiating \hyperref[relation2]{(\ref*{relation2})} yields 
\begin{flalign}
\label{derivative2} 
A' (x) = \displaystyle\frac{A(x)^3}{1 - 3x A(x)^2}. 
\end{flalign} 

\noindent Differentiating again gives 
\begin{flalign}
\label{secondderivative2}
A'' (x) = \displaystyle\frac{3 A(x)^2 \big( A' (x) + A(x)^3 -  x A(x)^2 A' (x) \big)}{(1 - 3x A(x)^2)^2}
\end{flalign}

\noindent and repeating yields  
\begin{flalign} 
\label{thirdderivative2}
A''' (x) &= 3 A(x) \big(3x^2 A(x)^5 A'' (x) - 4x A(x)^3 A'' (x) + A(x) A'' (x) - 6x A(x)^5 A' (x) \nonumber \\
& \quad + 10 A(x)^3 A' (x) + 2x A(x)^2 A' (x)^2 + 2 A' (x)^2 + 6 A(x)^6 \big) (1 - 3x A(x)^2)^{-3}. 
\end{flalign}

\noindent Now, in order to establish \hyperref[function2]{(\ref*{function2})}, apply \hyperref[g2]{(\ref*{g2})}, \hyperref[derivative2]{(\ref*{derivative2})}, \hyperref[tp]{(\ref*{tp})}, and \hyperref[rp]{(\ref*{rp})} to express the left side as a rational function in $x$ and $A(x)$. Applying \hyperref[secondderivative2]{(\ref*{secondderivative2})}, \hyperref[thirdderivative2]{(\ref*{thirdderivative2})}, and \hyperref[derivative2]{(\ref*{derivative2})}, we also express the right side as a rational function in $x$ and $A(x)$. Simplifying, we obtain that that the two sides are equal; we omit this computation here (but the proof of a more general identity may be found at the end of Section 3). 
\end{proof}

\section{Proof of Theorem 1.3} 
In this section, we will prove \hyperref[idealtheoremgeneral]{Theorem \ref*{idealtheoremgeneral}} through a method similar to the one used when $m = 2$. We will suppose that $m > 1$, since the case $m = 1$ has been established by Stanley and Zanello \cite{14}. Let us begin by defining several posets. For each nonnegative integer $n$, let $P_n = P_n^{(0)} = P_{n, mn + 1}$. For each integer $j \in [1, m - 1]$, let $P_n^{(j)}$ be the poset obtained from removing the elements $p \in P_{n + 1}^{(0)}$ with $\lfloor p / (n + 1) \rfloor < j$; equivalently, $P_n^{(j)} = P_{n + 1} \backslash \bigcup_{h = 0}^{j - 1} \{ h(n + 1) + 1, h(n + 1) + 2, \ldots , h(n + 1) + n \}$.  If $m = 2$, then observe that $P_n^{(1)} = Q_n$ from the previous section. For each nonnegative integer $n$, let $A_n$ denote the number of order ideals in $P_n$; for each $j \in [0, m - 1]$, let $A_n^{(j)}$ be the number of order ideals in $P_n^{(j)}$. Applying the theorem of Bizley (see \cite{6}), we see that
\begin{flalign}
\label{expressionageneral}
A_n^{(0)} = \binom{mn + n + 1}{n} / (mn + n + 1).
\end{flalign} 

Define the generating function $A^{(j)} (x) = \sum_{k = 0}^{\infty} A_k^{(j)} x^k$, where $x$ is a formal variable; let $A(x) = A^{(0)} (x)$. In order to obtain analogues of \hyperref[ab]{(\ref*{ab})} and \hyperref[relation2]{(\ref*{relation2})}, we will apply a recursive method similar to the one used in the previous section. 

For each integer $i \in [1, n]$, let $J_i (P_n) \subset J (P_n)$ be the set of order ideals of $P_n$ that contain $\{ 1, 2, \ldots , i - 1 \}$ but not $i$. For each integer $i \in [1, n + 1]$ and $h \in [1, m - 1]$, let $J_i \big( P_n^{(h)} \big) \subset J \big( P_n^{(h)} \big)$ denote the set of order ideals of $P_n^{(h)}$ that contain $\{ h(n + 1) + 1, h(n + 1) + 2, \ldots , h(n + 1) + i - 1 \}$ but not $h(n + 1) + i$. When $m = 2$ and $h = 1$, we recover $J_i (Q_n)$ from the previous section. As in Section 2, we may partition $J (P_n) = \bigcup_{i = 1}^n J_i (P_n)$ and $J \big( P_n^{(h)} \big) = \bigcup_{i = 1}^{n + 1} J_i \big( P_n^{(h)} \big)$. We will use these decompositions to obtain the following result. 

\begin{prop}
\label{relationgeneral}
We have that $A^{(j)} (x) = A(x)^{m - j + 1}$ for each integer $j \in [1, m - 1]$. Moreover, $x A(x)^{m + 1} - A(x) + 1 = 0$. 
\end{prop}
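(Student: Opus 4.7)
The plan is to generalize the decomposition approach of Section 2. There, $J(P_n)$ and $J(Q_n)$ were each partitioned by the smallest absent minimal element, and the residual structure was identified with order ideals of smaller posets in the family $\{P_n, Q_n\}$; this gave the relations $A(x) = 1 + xA(x)B(x)$ and $B(x) = A(x)^2$, which combine to yield the algebraic equation \hyperref[relation2]{(\ref*{relation2})}. For general $m$, the analogous strategy should produce a cascade of relations among the generating functions $A^{(0)}(x), A^{(1)}(x), \ldots, A^{(m-1)}(x)$ that chain together to give both $A^{(j)} = A^{m-j+1}$ and the stated algebraic equation.

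Concretely, for each $h \in [0, m-1]$ and each admissible $i$, I would define an upper part $P_n^{(h)}(i)$ consisting of those $p \in P_n^{(h)}$ that are greater than some integer in the prefix $[h(n+1)+1, h(n+1)+i-1]$ but incomparable to each integer in $[h(n+1)+i, h(n+1)+n]$, together with a lower part $P_n^{(h)}(-i)$ of those $p$ incomparable to every integer in $[h(n+1)+1, h(n+1)+i]$. Explicit affine maps, analogous to the four maps exhibited in Section 2, should then establish the isomorphisms $P_n^{(h)}(i) \simeq P_{i-1}^{(h+1)}$ for $h \in [0, m-2]$, the wrap-around $P_n^{(m-1)}(i) \simeq P_{i-1}$, together with the lower-part isomorphisms $P_n^{(0)}(-i) \simeq P_{n-i}$ and $P_n^{(h)}(-i) \simeq P_{n-i+1}$ for $h \in [1, m-1]$. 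Using the decomposition $I = \{h(n+1)+1, \ldots, h(n+1)+i-1\} \cup I_1 \cup I_2$ on $J_i(P_n^{(h)})$ and summing, these yield the counting recursions $A_n = \sum_{i=1}^n A_{i-1}^{(1)} A_{n-i}$, $A_n^{(h)} = \sum_{i=1}^{n+1} A_{i-1}^{(h+1)} A_{n-i+1}$ for $h \in [1, m-2]$, and $A_n^{(m-1)} = \sum_{i=1}^{n+1} A_{i-1} A_{n-i+1}$.

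Translating these into generating function identities gives $A(x) = 1 + xA^{(1)}(x)A(x)$, $A^{(h)}(x) = A^{(h+1)}(x)A(x)$ for $h \in [1, m-2]$, and $A^{(m-1)}(x) = A(x)^2$. A downward induction on $h$, starting from $h = m-1$, then immediately yields $A^{(j)}(x) = A(x)^{m-j+1}$ for every $j \in [1, m-1]$; substituting $A^{(1)}(x) = A(x)^m$ into the first identity produces $A(x) = 1 + xA(x)^{m+1}$, which is the desired algebraic relation.

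The main obstacle will be verifying the two families of poset isomorphisms cleanly, with particular care at $h = m-1$ (where the upper part belongs to the base family $P_{i-1}$ rather than to a nonexistent $P_{i-1}^{(m)}$) and in bookkeeping the index shift from $n-i$ to $n-i+1$ that appears whenever $h \geq 1$ (a consequence of $P_n^{(h)}$ being a sub-poset of $P_{n+1}$ rather than $P_n$). Once these isomorphisms are in place, the chain of generating function equations collapses formally to give both assertions of the proposition.
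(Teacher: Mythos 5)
Your proposal is correct and follows essentially the same route as the paper: the same prefix decomposition $I = \{h(n+1)+1,\ldots,h(n+1)+i-1\} \cup I_1 \cup I_2$ with $I_1$ identified with an ideal of $P_{i-1}^{(h+1)}$ (interpreting $P^{(m)}$ as $P^{(0)}$, which is your wrap-around case) and $I_2$ with an ideal of a smaller base poset, yielding $A^{(h)}(x) = A(x)A^{(h+1)}(x)$ and $A(x) = 1 + xA^{(1)}(x)A(x)$, which chain together exactly as you describe. Your extra care with the explicit isomorphisms and the index shift at $h \ge 1$ is consistent with what the paper leaves implicit.
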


\begin{proof}
To verify the first equality, it suffices to check that $A^{(h)} (x) = A(x) A^{(h + 1)} (x)$ for each integer $h \in [1, m - 1]$, where the index $h$ is taken modulo $m$. Let $i \in [1, n + 1]$ and $h \in [1, m - 1]$ be integers and let $I \in J_i \big( P_n^{(h)} \big)$ be an order ideal. As in the previous section, $I$ can be partitioned as the disjoint union $\{ h(n + 1) + 1, h(n + 1) + 2, \ldots , h(n + 1) + i - 1 \} \cup I_1 \cup I_2$, where $I_1$ consists of the elements of $I$ greater than some $j \in \{ h(n + 1) + 1, h(n + 1) + 2, \ldots , h(n + 1) + i - 1 \}$ and incomparable to each $j \in \{ h(n + 1) + i, h(n + 1) + i + 2, \ldots , h(n + 1) + n \}$ (where the ordering is with respect to the poset $P_n^{(h)}$) and $I_2$ consists of the elements of $I$ that are incomparable to each $j \in \{ h(n + 1) + 1, h(n + 1) + 2, \ldots , h(n + 1) + i \}$. Observe that $I_1$ is an order ideal in a poset isomorphic to $P_{i - 1}^{(h + 1)}$ and that $I_2$ is an order ideal in a poset isomorphic to $P_{n - i + 1}$. Hence,
\begin{flalign*}
A_n^{(h)} &= \displaystyle\sum_{i = 1}^{n + 1} \displaystyle\sum_{I \in J_i (P_n^{(h)})} 1 = \displaystyle\sum_{i = 1}^{n + 1} \displaystyle\sum_{I_1 \in J (P_{i - 1}^{(h + 1)})} \displaystyle\sum_{I_2 \in J(P_{n - i + 1})} 1 = \displaystyle\sum_{i = 0}^n A_i^{(h + 1)} A_{n - i}. 
\end{flalign*}

\noindent This recursion yields the relation $A^{(h)} (x) = A(x) A^{(h + 1)} (x)$ for all integers $h \in [1, m - 1]$, thereby establishing the first statement of the proposition. The second statement of the proposition follows from the equality $A(x) = x A^{(1)} (x) A(x) + 1$, which can be verified through a similar recursive method. 
\end{proof}

\noindent For each integer $n \ge 0$ and each $p \in P_n$, let $\rho_{n, 0} (p) = \lfloor p / n \rfloor$. For each integer $j \in [1, m - 1]$ and element $q \in P_n^{(j)}$, let $\rho_{n, j} (q) = \rho_{n, 0} (q)$. For each integer $j \in [0, m - 1]$, define the sums  
\begin{flalign*} 
T_n^{(j)} = \displaystyle\sum_{I \in J (P_n^{(j)})} |I|; \quad R_n^{(j)} = \displaystyle\sum_{I \in J(P_n^{(j)})} \displaystyle\sum_{i \in I} \rho_{n, j} (i); \quad G_n^{(j)} = \displaystyle\sum_{I \in J(P_n^{(j)})} \left( \sigma(I) - \binom{|I|}{2} \right). 
\end{flalign*}

\noindent Also define the generating functions 
\begin{flalign*}
T_j (x) = \displaystyle\sum_{k = 0}^{\infty} T_k^{(j)} x^k; \quad R_j (x) = \displaystyle\sum_{k = 0}^{\infty} R_k^{(j)} x^k; \quad G_j (x) = \displaystyle\sum_{k = 0}^{\infty} G_k^{(j)} x^k. 
\end{flalign*}

\noindent Analogous to \hyperref[idealtheorem2]{Theorem \ref*{idealtheorem2}} (which is equivalent to \hyperref[function2]{(\ref*{function2})}), \hyperref[idealtheoremgeneral]{Theorem \ref*{idealtheoremgeneral}} is equivalent to an algebraic identity involving $G_0 (x)$ and derivatives of $A(x)$. Specifically, due to \hyperref[expressionageneral]{(\ref*{expressionageneral})}, it suffices to establish the equality 
\begin{flalign}
\label{functiongeneral}
m (m + 1) x^3 A''' (x) + m (2m + 4) x^2 A'' (x) - 24 G_0 (x) = 0 
\end{flalign}

\noindent in order to prove \hyperref[idealtheoremgeneral]{Theorem \ref*{idealtheoremgeneral}}. As in Section 2, we will deduce \hyperref[functiongeneral]{(\ref*{functiongeneral})} by expressing $T_j (x)$, $R_j (x)$, and $G_0 (x)$ as rational functions in $x$ and $A(x)$. Let us begin with $T_j (x)$. 

\begin{prop}
For each integer $j \in [1, m - 2]$, 
\begin{flalign}
\label{tjrelation}
T_j (x) &= A(x) T_{j + 1} (x) + (m - j) x A' (x) A(x)^{m - j} + A(x)^{m - j} T_0 (x). 
\end{flalign}

\noindent Moreover, 
\begin{flalign}
\label{t0relation}
T_0 (x) &= x A(x) T_1 (x) + m x^2 A' (x) A(x)^m + x A(x)^m T_0 (x)
\end{flalign}

\noindent and 
\begin{flalign}
\label{trelation}
T_{m - 1} (x) &= 2 A(x) T_0 (x) + x A' (x) A(x). 
\end{flalign}

\end{prop}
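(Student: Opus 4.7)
The plan is to imitate the proof of \hyperref[tproposition]{Proposition \ref*{tproposition}} from Section 2, treating the three identities as three cases of essentially the same computation and exploiting the relation $A^{(j)}(x) = A(x)^{m-j+1}$ from \hyperref[relationgeneral]{Proposition \ref*{relationgeneral}} to package the answer cleanly. For each of the three relations, I would fix the appropriate index $h$, use the decomposition $J \bigl( P_n^{(h)} \bigr) = \bigcup_i J_i \bigl( P_n^{(h)} \bigr)$, and apply the isomorphisms exhibited in the proof of \hyperref[relationgeneral]{Proposition \ref*{relationgeneral}} to write each $I \in J_i \bigl( P_n^{(h)} \bigr)$ as a disjoint union $\{ h(n+1)+1, \ldots, h(n+1)+i-1 \} \cup I_1 \cup I_2$ (resp.\ $\{ 1, \ldots , i - 1 \} \cup I_1 \cup I_2$ when $h = 0$), with $I_1$ and $I_2$ ranging over order ideals in the two smaller posets that the isomorphisms produce.

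For \hyperref[tjrelation]{(\ref*{tjrelation})} (the generic case $j \in [1, m - 2]$), the smaller posets are $P_{i - 1}^{(j + 1)}$ and $P_{n - i + 1}$, so writing $|I| = (i - 1) + |I_1| + |I_2|$ and summing over $i \in [1, n + 1]$ yields
\begin{flalign*}
T_n^{(j)} = \sum_{k = 0}^n \bigl( k A_k^{(j + 1)} A_{n - k} + T_k^{(j + 1)} A_{n - k} + A_k^{(j + 1)} T_{n - k}^{(0)} \bigr)
\end{flalign*}
after the substitution $k = i - 1$. Passing to generating functions gives
\begin{flalign*}
T_j (x) = x \bigl( A^{(j + 1)} (x) \bigr)' A(x) + A(x) T_{j + 1} (x) + A^{(j + 1)} (x) T_0 (x),
\end{flalign*}
and substituting $A^{(j + 1)} (x) = A(x)^{m - j}$, whose derivative is $(m - j) A(x)^{m - j - 1} A' (x)$, produces exactly \hyperref[tjrelation]{(\ref*{tjrelation})}. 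For \hyperref[t0relation]{(\ref*{t0relation})} the same computation goes through with $h = 0$, except that $i$ now ranges over $[1, n]$ (the analogue of the index set used for $J_i (P_n)$ in Section 2), the smaller posets become $P_{i - 1}^{(1)}$ and $P_{n - i}$, and one uses $A^{(1)} (x) = A(x)^m$; the index shift is what produces the extra factor of $x$ in every term.

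The case $h = m - 1$ giving \hyperref[trelation]{(\ref*{trelation})} is where a small wrinkle enters and deserves the most attention: the poset playing the role of $P_{i - 1}^{(h + 1)} = P_{i - 1}^{(m)}$ must be identified by hand, just as $Q_n (i) \simeq P_{i - 1}$ was identified in Section 2 rather than being read off from a general formula. Directly examining the elements of $P_n^{(m - 1)}$ lying above $\{(m - 1)(n + 1) + 1, \ldots, (m - 1)(n + 1) + i - 1\}$ and incomparable to the remaining "bottom-row" elements, one recovers a poset isomorphic to $P_{i - 1} = P_{i - 1}^{(0)}$, so $I_1$ contributes $T_{i - 1}^{(0)}$ rather than any new quantity. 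The same generating-function manipulation then gives $T_{m - 1}(x) = x A'(x) A(x) + A(x) T_0(x) + A(x) T_0(x)$, which is \hyperref[trelation]{(\ref*{trelation})}; the factor of $2$ comes precisely from the fact that both $I_1$ and $I_2$ now live in copies of $P^{(0)}$.

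The main obstacle is bookkeeping rather than genuine difficulty: one must correctly identify the isomorphism class of the "upper" subposet in the wrap-around case $h = m - 1$ and be careful about the shift between $i \in [1, n]$ and $i \in [1, n + 1]$ in the $h = 0$ case, since both effects account for the asymmetry between \hyperref[tjrelation]{(\ref*{tjrelation})}, \hyperref[t0relation]{(\ref*{t0relation})}, and \hyperref[trelation]{(\ref*{trelation})}. Once these are sorted out, the remaining work is the formal-power-series translation of the recursions, which is exactly parallel to the derivation of \hyperref[tprecursion]{(\ref*{tprecursion})}--\hyperref[tqrecursion]{(\ref*{tqrecursion})}.
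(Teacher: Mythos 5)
Your proposal is correct and follows essentially the same route as the paper: the paper likewise derives the three coefficient-level recursions from the ideal decomposition $J\bigl(P_n^{(h)}\bigr) = \bigcup_i J_i\bigl(P_n^{(h)}\bigr)$ (citing the proof of Proposition 2.2 for the details you spell out) and then translates them into generating functions using $A^{(j)}(x) = A(x)^{m-j+1}$. Your explicit handling of the wrap-around case $h = m-1$ and of the index shift at $h = 0$ matches what the paper leaves implicit.
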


\begin{proof}
Following the proof of \hyperref[tproposition]{Proposition \ref*{tproposition}}, one obtains that
\begin{flalign*}T_n^{(j)} = \displaystyle\sum_{i = 0}^n \big( T_i^{(j + 1)} A_{n - i} + i A_i^{(j + 1)} A_{n - i} + A_i^{(j + 1)} T_{n - i}^{(0)} \big). 
\end{flalign*}

\noindent for each integer $j \in [1, m - 2]$;  
\begin{flalign*}
T_n^{(0)} = \displaystyle\sum_{i = 0}^{n - 1} \big( T_i^{(1)} A_{n - i - 1} + i A_i A_{n - i - 1} + A_i^{(1)} T_{n - i - 1}^{(0)} \big); 
\end{flalign*}

\noindent and 
\begin{flalign*}
T_n^{(m - 1)} = \displaystyle\sum_{i = 0}^n \big( T_i^{(0)} A_{n - i} + i A_i A_{n - i} + A_i T_{n - i}^{(0)} \big). 
\end{flalign*}

\noindent These recursive relations imply the proposition. 
\end{proof} 

\begin{cor}
For each integer $j \in [1, m - 1]$, 
\begin{flalign}
\label{tj}
T_j (x) = (m + 1 - j) A(x)^{m - j} T_0 (x) + \binom{m + 1 - j}{2} x A' (x) A(x)^{m - j}
\end{flalign}

\noindent and 
\begin{flalign}
\label{t0}
T_0 (x) = \displaystyle\frac{\binom{m + 1}{2} x^2 A' (x)^2}{A(x)}. 
\end{flalign}

\noindent Moreover, 
\begin{flalign}
\label{tsum}
\displaystyle\sum_{j = 1}^{m - 1} A(x)^{j - 1} T_j (x) &= A(x)^{m - 1} \left( \displaystyle\frac{(m^2 + m - 2) T_0 (x)}{2} + \binom{m + 1}{3} x A' (x) \right).  
\end{flalign} 
\end{cor}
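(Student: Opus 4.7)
The plan is to establish the three formulas in order, using descending induction on $j$ for the first, the algebraic relation of Proposition \ref{relationgeneral} for the second, and direct summation for the third.

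First I would prove the formula for $T_j(x)$ by downward induction on $j$, starting from $j = m-1$. The base case is exactly equation \hyperref[trelation]{(\ref*{trelation})}, since $\binom{m+1-(m-1)}{2} = 1$ and $(m+1-(m-1)) = 2$. For the inductive step, assuming the formula holds for $T_{j+1}(x)$, I would substitute it into \hyperref[tjrelation]{(\ref*{tjrelation})}; the two contributions to the $T_0(x)$ coefficient combine as $(m-j)+1 = m+1-j$, and the contributions to the $xA'(x)$ coefficient combine as $\binom{m-j}{2} + (m-j) = \binom{m+1-j}{2}$ by Pascal's identity. This completes the induction.

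Next, to derive \hyperref[t0]{(\ref*{t0})}, I would specialize the just-proved formula to $j = 1$ to get $T_1(x) = m A(x)^{m-1} T_0(x) + \binom{m}{2} xA'(x)A(x)^{m-1}$, substitute into \hyperref[t0relation]{(\ref*{t0relation})}, and collect terms to obtain
\begin{flalign*}
T_0(x)\big(1 - (m+1) x A(x)^m\big) = \binom{m+1}{2} x^2 A'(x) A(x)^m.
\end{flalign*}
Here the key step is recognizing that differentiating the relation $xA(x)^{m+1} - A(x) + 1 = 0$ yields $A'(x)\big(1 - (m+1)xA(x)^m\big) = A(x)^{m+1}$, so dividing the displayed equation by $A'(x)$ and applying this identity gives $T_0(x) = \binom{m+1}{2} x^2 A'(x)^2 / A(x)$, as claimed.

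Finally, for \hyperref[tsum]{(\ref*{tsum})}, I would insert the formula for $T_j(x)$ into the sum and factor out $A(x)^{m-1}$, leaving
\begin{flalign*}
\sum_{j=1}^{m-1} A(x)^{j-1} T_j(x) = A(x)^{m-1} \left( T_0(x) \sum_{j=1}^{m-1}(m+1-j) + xA'(x) \sum_{j=1}^{m-1} \binom{m+1-j}{2} \right).
\end{flalign*}
Re-indexing by $k = m+1-j$ turns these into $\sum_{k=2}^{m} k = \binom{m+1}{2} - 1 = (m^2+m-2)/2$ and $\sum_{k=2}^{m}\binom{k}{2} = \binom{m+1}{3}$ (the latter by the hockey stick identity), which gives the claimed expression. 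I do not expect any of these steps to be difficult; the main obstacle is just careful bookkeeping of the binomial coefficients and the single algebraic manipulation using the differentiated form of the defining equation for $A(x)$.
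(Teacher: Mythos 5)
Your proposal is correct and follows essentially the same route as the paper: downward induction on $j$ from the base case \hyperref[trelation]{(\ref*{trelation})} via \hyperref[tjrelation]{(\ref*{tjrelation})} and Pascal's identity for \hyperref[tj]{(\ref*{tj})}, substitution of the $j=1$ case into \hyperref[t0relation]{(\ref*{t0relation})} together with the differentiated defining relation for $A(x)$ to get \hyperref[t0]{(\ref*{t0})}, and direct summation for \hyperref[tsum]{(\ref*{tsum})}. The only (immaterial) difference is the order in which the last two identities are derived.
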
 

\begin{proof}
Using \hyperref[tjrelation]{(\ref*{tjrelation})} and induction on $m - j$ (the base case $m - j = 1$ is given by \hyperref[trelation]{(\ref*{trelation})}), we obtain \hyperref[tj]{(\ref*{tj})}. Multiplying \hyperref[tj]{(\ref*{tj})} by $A(x)^{j - 1}$ and summing over $j$ yields \hyperref[tsum]{(\ref*{tsum})}. Inserting \hyperref[tj]{(\ref*{tj})}, with $j = 1$, into \hyperref[t0relation]{(\ref*{t0relation})} gives \hyperref[t0]{(\ref*{t0})}. 

\end{proof}

\begin{cor}
We have that 
\begin{flalign}
\label{tderivativesum} 
\displaystyle\sum_{j = 1}^{m - 1} A(x)^j T_j' (x) = A(x)^{m - 1} &\Bigg( \displaystyle\frac{(m^2 + m - 2) A(x) T_0' (x)}{2} + \displaystyle\frac{(m - 1) m (m + 1) A' (x) T_0 (x)}{3} \nonumber \\
& \quad  + \binom{m + 1}{3} A' (x) A(x) + \binom{m + 1}{3} x A'' (x) A(x) \nonumber \\
& \quad + \displaystyle\frac{(m - 1) m (m + 1) (3m - 2) x A' (x)^2}{24} \Bigg). 
\end{flalign}
\end{cor}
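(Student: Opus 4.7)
The plan is to differentiate the closed form \hyperref[tj]{(\ref*{tj})} for $T_j(x)$, multiply by $A(x)^j$, and sum over $j \in [1, m-1]$; after this mechanical step, matching the result against the claimed identity reduces to four elementary scalar sum evaluations in $j$.

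Starting from $T_j(x) = (m + 1 - j) A(x)^{m - j} T_0(x) + \binom{m + 1 - j}{2} x A'(x) A(x)^{m - j}$, applying the product rule yields a five-term expression for $T_j'(x)$. After multiplying by $A(x)^j$, each factor $A(x)^{m-j}$ or $A(x)^{m-j-1}$ combines with $A(x)^j$ to produce either $A(x)^m$ or $A(x)^{m-1}$, so
\begin{align*}
A(x)^j T_j'(x) &= (m+1-j)(m-j) A(x)^{m-1} A'(x) T_0(x) + (m+1-j) A(x)^m T_0'(x) \\
&\quad + \binom{m+1-j}{2} A'(x) A(x)^m + \binom{m+1-j}{2} x A''(x) A(x)^m \\
&\quad + (m-j)\binom{m+1-j}{2} x A'(x)^2 A(x)^{m-1}.
\end{align*}
Each of the five $x$-dependent factors is independent of $j$ and matches exactly one of the five terms on the right-hand side of \hyperref[tderivativesum]{(\ref*{tderivativesum})} (after pulling $A(x)^{m-1}$ out as a common factor); hence the identity reduces to checking agreement of the corresponding scalar coefficients after summing $j$ from $1$ to $m-1$.

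Those scalar sums are $\sum (m+1-j)(m-j)$, $\sum (m+1-j)$, $\sum \binom{m+1-j}{2}$, and $\sum (m-j)\binom{m+1-j}{2}$, each taken over $j \in [1, m-1]$. The substitution $k = m - j$ converts them to standard power sums: the first three collapse by the hockey-stick identity or by combining closed forms for $\sum k$ and $\sum k^2$, while the last requires expanding $k\binom{k+1}{2} = (k^3 + k^2)/2$ and then invoking the formulas for $\sum k^2$ and $\sum k^3$. The resulting values are precisely $(m^2+m-2)/2$, $(m-1)m(m+1)/3$, $\binom{m+1}{3}$ (twice), and $(m-1)m(m+1)(3m-2)/24$, which are the coefficients appearing in \hyperref[tderivativesum]{(\ref*{tderivativesum})}.

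There is no genuine obstacle: the conceptual content has already been absorbed into \hyperref[tj]{(\ref*{tj})}, and what remains is a routine five-term expansion followed by the four scalar sums above. The only mild subtlety is the last sum, which is not directly a hockey-stick sum but nevertheless factors cleanly via the standard cubic and quadratic power-sum formulas.
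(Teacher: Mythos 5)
Your proposal is correct and follows exactly the paper's proof: the paper likewise differentiates \hyperref[tj]{(\ref*{tj})}, multiplies by $A(x)^j$, and sums over $j \in [1, m-1]$, leaving the four scalar sums implicit. Your evaluations of those sums (in particular $\sum_{k=1}^{m-1} k\binom{k+1}{2} = (m-1)m(m+1)(3m-2)/24$) check out, so nothing is missing.
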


\begin{proof}
Differentiating \hyperref[tj]{(\ref*{tj})} gives 
\begin{flalign*}
T_j' (x) &= (m + 1 - j) A(x)^{m - j} T_0' (x) + (m + 1 - j)(m - j) A' (x) A(x)^{m - j - 1} T_0 (x) \nonumber \\
& \quad + \binom{m + 1 - j}{2} \big( A' (x) A(x)^{m - j} + x A'' (x) A(x)^{m - j} + (m - j) x A' (x)^2 A(x)^{m - j - 1} \big) 
\end{flalign*}

\noindent for each integer $j \in [1, m - 1]$. Multiplying this equality by $A(x)^j$ and summing over $j$ yields \hyperref[tderivativesum]{(\ref*{tderivativesum})}. 
\end{proof} 

\noindent Next, we will find $R_j (x)$. 
\begin{prop}
For each integer $j \in [1, m - 2]$, 
\begin{flalign}
\label{rjrelation}
R_j (x) = A(x) R_{j + 1} (x) + j A(x)^{m - j} T_0 (x) + j (m - j) x A' (x) A(x)^{m - j} + A(x)^{m - j} R_0 (x).
\end{flalign}

\noindent Moreover, 
\begin{flalign}
\label{r0relation}
R_0 (x) = x A(x) R_1 (x) + x A(x)^m R_0 (x)
\end{flalign}

\noindent and
\begin{flalign}
\label{rrelation}
R_{m - 1} (x) = 2 A(x) R_0 (x) + (2m - 1) A(x) T_0 (x) + (m - 1) x A' (x) A(x). 
\end{flalign}
\end{prop}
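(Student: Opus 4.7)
My plan is to adapt the template of Proposition \ref{rproposition} to arbitrary $m$ by deriving three recursions for the sequences $R_n^{(j)}$ and translating them into the generating function identities \hyperref[rjrelation]{(\ref*{rjrelation})}, \hyperref[r0relation]{(\ref*{r0relation})}, and \hyperref[rrelation]{(\ref*{rrelation})}. First, as in the proof of Proposition \ref{relationgeneral}, I would partition $J(P_n^{(j)}) = \bigcup_i J_i(P_n^{(j)})$, and for each $I \in J_i(P_n^{(j)})$ decompose $I$ as the disjoint union $M \cup I_1 \cup I_2$, where $M$ is the prescribed set of $i-1$ minima (all at level $\rho = j$ by construction), $I_1$ is an order ideal in a subposet of $P_n^{(j)}$ isomorphic to $P_{i-1}^{(j+1)}$ when $j \in [0, m-2]$ and to $P_{i-1}$ when $j = m-1$, and $I_2$ is an order ideal in a subposet isomorphic to $P_{n-i}$ when $j = 0$ and to $P_{n-i+1}$ when $j \in [1, m-1]$.

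The core computational task is tracking the shifts in $\rho$ under these isomorphisms. The $i-1$ elements of $M$ each contribute $\rho = j$, yielding total $j(i-1)$. For $I_2$, the embedding sends $p_2$ to an element at level $\rho_{n-i+1,0}(p_2) + j$ in $P_n^{(j)}$, generalizing the $+1$ shift appearing in the $m = 2$ proof of Proposition \ref{rproposition}. For $I_1$ in the intermediate case $j \in [1, m-2]$, the poset $P_{i-1}^{(j+1)}$ already sits at levels $\rho \ge j+1$, matching the image's range in $P_n^{(j)}$, so the embedding is level-preserving and $\rho_{n,j}(\tilde p_1) = \rho_{i-1,j+1}(p_1)$. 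In the base case $j = m-1$, the embedding $P_{i-1} \hookrightarrow P_n^{(m-1)}$ instead raises levels by $+m$, generalizing the $+2$ shift in the $m = 2$ argument.

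Summing over $I_1$ and $I_2$ yields the recursions
\begin{flalign*}
R_n^{(j)} &= \sum_{i=0}^n \bigl(A_{n-i} R_i^{(j+1)} + j i A_i^{(j+1)} A_{n-i} + A_i^{(j+1)} (R_{n-i}^{(0)} + j T_{n-i}^{(0)})\bigr) \text{ for } j \in [1, m-2], \\
R_n^{(0)} &= \sum_{i=0}^{n-1} \bigl(A_{n-i-1} R_i^{(1)} + A_i^{(1)} R_{n-i-1}^{(0)}\bigr), \\
R_n^{(m-1)} &= \sum_{i=0}^n \bigl((m-1) i A_i A_{n-i} + A_{n-i}(R_i^{(0)} + m T_i^{(0)}) + A_i (R_{n-i}^{(0)} + (m-1) T_{n-i}^{(0)})\bigr).
\end{flalign*}
Translating these to generating functions and substituting $A^{(j+1)}(x) = A(x)^{m-j}$ from Proposition \ref{relationgeneral} (together with $(A^{(j+1)})'(x) = (m-j) A(x)^{m-j-1} A'(x)$) directly yields \hyperref[rjrelation]{(\ref*{rjrelation})}, \hyperref[r0relation]{(\ref*{r0relation})}, and \hyperref[rrelation]{(\ref*{rrelation})}, respectively.

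The main obstacle is the careful bookkeeping of the $\rho$-shifts, especially the asymmetry between the intermediate case (shift $0$ for $I_1$) and the base case $j = m-1$ (shift $+m$ for $I_1$), along with the uniform shift $+j$ for $I_2$. Once these are verified, the convolution manipulations are routine and essentially parallel the computation in Section 2.
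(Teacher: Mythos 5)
Your proposal is correct and follows essentially the same route as the paper: the same decomposition of $J\big(P_n^{(j)}\big)$ into the $J_i\big(P_n^{(j)}\big)$, the same three convolution recursions for $R_n^{(j)}$ (with the same level-shift bookkeeping of $0$ for $I_1$ in the intermediate case, $+m$ for $I_1$ when $j = m-1$, and $+j$ for $I_2$), followed by the same translation to generating functions via $A^{(j+1)}(x) = A(x)^{m-j}$. Your upper limit $\sum_{i=0}^{n}$ in the recursion for $R_n^{(m-1)}$ is in fact the correct one (the paper's printed $\sum_{i=0}^{n-1}$ there is a typo, as the target identity \hyperref[rrelation]{(\ref*{rrelation})} carries no extra factor of $x$).
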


\begin{proof}
Following the proof of \hyperref[rproposition]{Proposition \ref*{rproposition}}, one obtains that 
\begin{flalign*}
R_n^{(j)} = \displaystyle\sum_{i = 0}^n \big( A_{n - i} R_i^{(j + 1)} + ij A_i^{(j + 1)} A_{n - i} + A_i^{(j + 1)} (R_{n - i}^{(0)} + j T_{n - i}^{(0)}) \big) 
\end{flalign*}

\noindent for each integer $j \in [1, m - 2]$; 
\begin{flalign*}
R_n^{(0)} = \displaystyle\sum_{i = 0}^{n - 1} \big( A_{n - i - 1} R_i^{(1)} + A_i^{(1)} R_{n - i - 1}^{(0)} \big); 
\end{flalign*}

\noindent and 

\begin{flalign*}
R_n^{(m - 1)} = \displaystyle\sum_{i = 0}^{n - 1} \big( A_{n - i} (R_i^{(0)} + m T_i^{(0)}) + i (m - 1) A_i A_{n - i} + A_i (R_{n - i}^{(0)} + (m - 1) T_{n - i}^{(0)}) \big). 
\end{flalign*}

\noindent These recursive relations imply the proposition. 
\end{proof}

\begin{cor}
We have that
\begin{flalign}
\label{r0}
R_0 (x) = \displaystyle\frac{\binom{m + 1}{2} x A' (x) T_0 (x) + \binom{m + 1}{3} x^2 A' (x)^2}{A(x)}
\end{flalign}

\noindent and 
\begin{flalign}
\label{rsum}
\displaystyle\sum_{j = 1}^{m - 1} A(x)^{j - 1} R_j (x) = A(x)^{m - 1} &\Bigg( \displaystyle\frac{(m^2 + m - 2) R_0 (x)}{2} + \displaystyle\frac{m(2m^2 + 3m - 5) T_0 (x)}{6} \nonumber \\
& \quad  + \displaystyle\frac{(m - 1) m^2 (m + 1) x A' (x)}{12} \Bigg). 
\end{flalign}
\end{cor}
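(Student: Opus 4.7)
The plan is to mimic the corollary following the $T_j$-recursion. First, by induction on $m-j$ starting from the base case $j=m-1$ given by \hyperref[rrelation]{(\ref*{rrelation})}, together with the recursion \hyperref[rjrelation]{(\ref*{rjrelation})}, I would derive a closed form
\begin{flalign*}
R_j (x) = (m+1-j) A(x)^{m-j} R_0 (x) + d_j A(x)^{m-j} T_0 (x) + e_j x A' (x) A(x)^{m-j}
\end{flalign*}
for each $j \in [1, m-1]$, where the numerical coefficients $d_j, e_j$ are determined by the descent relations $d_j = d_{j+1} + j$ and $e_j = e_{j+1} + j(m-j)$, with initial values $d_{m-1} = 2m-1$ and $e_{m-1} = m-1$ read off from \hyperref[rrelation]{(\ref*{rrelation})}. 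Telescoping these recursions gives explicit formulas for $d_j, e_j$ as quadratic and cubic polynomials in $j$; in particular, one finds $d_1 = \binom{m+1}{2}$ and $e_1 = \binom{m+1}{3}$.

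To obtain \hyperref[r0]{(\ref*{r0})}, I would then specialize the closed form to $j=1$ and insert it into \hyperref[r0relation]{(\ref*{r0relation})}. This gives
\begin{flalign*}
R_0 (x) \big( 1 - (m+1) x A(x)^m \big) = A(x)^m \left( \binom{m+1}{2} x T_0 (x) + \binom{m+1}{3} x^2 A' (x) \right).
\end{flalign*}
Differentiating the algebraic identity $x A(x)^{m+1} = A(x) - 1$ of \hyperref[relationgeneral]{Proposition \ref*{relationgeneral}} yields $A(x)^{m+1} = (1 - (m+1) x A(x)^m) A' (x)$, i.e.\ $1 - (m+1) x A(x)^m = A(x)^{m+1} / A' (x)$. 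Substituting this into the previous equation and simplifying immediately produces \hyperref[r0]{(\ref*{r0})}.

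To obtain \hyperref[rsum]{(\ref*{rsum})}, I would multiply the closed form for $R_j(x)$ by $A(x)^{j-1}$ and sum over $j \in [1, m-1]$. The factor $A(x)^{m-j} \cdot A(x)^{j-1} = A(x)^{m-1}$ pulls out of every term, leaving
\begin{flalign*}
\sum_{j = 1}^{m - 1} A(x)^{j - 1} R_j (x) = A(x)^{m-1} \Bigg( R_0(x) \sum_{j=1}^{m-1} (m+1-j) + T_0(x) \sum_{j=1}^{m-1} d_j + x A'(x) \sum_{j=1}^{m-1} e_j \Bigg).
\end{flalign*}
A direct computation gives $\sum_{j=1}^{m-1} (m+1-j) = (m^2+m-2)/2$, while $\sum d_j$ and $\sum e_j$ reduce to standard Faulhaber sums of $i$, $i^2$, $i^3$, which evaluate (respectively) to $m(2m^2+3m-5)/6$ and $(m-1) m^2 (m+1) / 12$. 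These yield \hyperref[rsum]{(\ref*{rsum})} exactly.

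The main obstacle is the arithmetic bookkeeping of the constants $d_j$ and $e_j$: unlike the $T_j$-case, the recursion for $e_j$ has a quadratic inhomogeneity $j(m-j)$, so $e_j$ is a cubic in $j$ and $\sum_{j=1}^{m-1} e_j$ requires collapsing a sum of $i^2(m-i)$ into a clean $(m-1) m^2 (m+1)/12$. Similar care is needed for $\sum d_j$; aside from this routine but error-prone algebra, the argument parallels the earlier corollary for $T_j(x)$ without essential new ideas.
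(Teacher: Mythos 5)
Your proposal is correct and follows essentially the same route as the paper: induction on $m - j$ from the base case \hyperref[rrelation]{(\ref*{rrelation})} via \hyperref[rjrelation]{(\ref*{rjrelation})} to get the closed form for $R_j(x)$ (your $d_j$ and $e_j$ agree with the paper's explicit coefficients $\tfrac{(m-j+1)(m+j)}{2}$ and $\tfrac{m+2j-1}{3}\binom{m-j+1}{2}$), then summing against $A(x)^{j-1}$ for \hyperref[rsum]{(\ref*{rsum})} and specializing $j = 1$ into \hyperref[r0relation]{(\ref*{r0relation})}, together with the derivative identity $1 - (m+1)xA(x)^m = A(x)^{m+1}/A'(x)$, for \hyperref[r0]{(\ref*{r0})}. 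The coefficient sums you report all check out, so there is nothing to add.
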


\begin{proof}
Using \hyperref[rjrelation]{(\ref*{rjrelation})} and induction on $m - j$ (the base case $m - j = 1$ is given by \hyperref[rrelation]{(\ref*{rrelation})}), we obtain that 
\begin{flalign}
\label{rj}
R_j (x) &= (m - j + 1) A(x)^{m - j} R_0 (x) + \displaystyle\frac{(m - j + 1)(m + j) A(x)^{m - j} T_0 (x)}{2} \nonumber \\
& \quad + \displaystyle\frac{m + 2j - 1}{3} \binom{m - j + 1}{2} x A' (x) A(x)^{m - j}
\end{flalign}

\noindent for each integer $j \in [1, m - 1]$. Multiplying \hyperref[rj]{(\ref*{rj})} by $A(x)^{j - 1}$ and summing over $j$ yields \hyperref[rsum]{(\ref*{rsum})}. Inserting \hyperref[rj]{(\ref*{rj})}, with $j = 1$, into \hyperref[r0relation]{(\ref*{r0relation})} gives \hyperref[r0]{(\ref*{r0})}. 
\end{proof}

\noindent We may now evaluate $G_0 (x)$. 
\begin{prop}
\label{grelation}
For each integer $j \in [1, m - 2]$, 
\begin{flalign*}
G_i (x) &= A(x) G_{i + 1} (x) + x A' (x) R_{i + 1} (x) - x A(x) T_{i + 1}' (x) \\
& \quad + A(x)^{m - i} G_0 (x) + A(x)^{m - i} R_0 (x) + (m - i) x A' (x) A(x)^{m - i - 1} R_0 (x) \\
& \quad + (i + 1) A(x)^{m - i} T_0 (x) + i (m - i) x A' (x) A(x)^{m - i - 1} T_0 (x) + i x A(x)^{m - i} T_0' (x) \\
& \quad + (2i + 1)(m - i) x A' (x) A(x)^{m - i} + i (m - i) x^2 A'' (x) A(x)^{m - i} \\
& \quad + i (m - i)^2 x^2 A' (x)^2 A(x)^{m - i - 1} - T_0 (x) T_{i + 1} (x). 
\end{flalign*}

\noindent Moreover, 
\begin{flalign*}
G_0 (x) &= x A(x) G_1 (x) + x^2 A' (x) R_1 (x) - x^2 A(x) T_1' (x) \\
& \quad + x A(x)^m G_0 (x) + x A(x)^m R_0 (x) + m x^2 A' (x) A(x)^{m - 1} R_0 (x) \\
& \quad + x A(x)^m T_0 (x) + m x^2 A' (x) A(x)^m - x T_0 (x) T_1 (x),
\end{flalign*}

\noindent and 
\begin{flalign*}
G_{m - 1} (x) &= 2 A(x) G_0 (x) + 2 x A' (x) R_0 (x) + 2 A(x) R_0 (x) \\
& \quad + (2m + 1) A(x) T_0 (x) + (2m - 1) x A' (x) T_0 (x) + (2m - 2) x A(x) T_0' (x) \\
& \quad + (2m - 1) x A' (x) A(x) + (m - 1) x^2 A'' (x) A(x) \\
& \quad + (m - 1) x^2 A' (x)^2 - T_0 (x)^2.  
\end{flalign*}
\end{prop}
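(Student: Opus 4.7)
The plan is to extend the argument of Proposition~\ref{gproposition} to the general $m$ setting, using the same decompositions of $J_i \bigl( P_n^{(j)} \bigr)$ already exploited in Proposition~\ref{relationgeneral} and in the $T$ and $R$ recursions of this section. For the first identity (interior $j \in [1, m - 2]$), I would fix $i \in [1, n + 1]$ and decompose each $I \in J_i \bigl( P_n^{(j)} \bigr)$ as
\[ I = \{ j(n + 1) + 1, j(n + 1) + 2, \ldots , j(n + 1) + i - 1 \} \cup I_1 \cup I_2, \]
where $I_1$ collects the elements of $I$ greater than some element of the middle block and incomparable to each of $\{ j(n + 1) + i, \ldots , j(n + 1) + n \}$, while $I_2$ collects the elements of $I$ incomparable to each of $\{ j(n + 1) + 1, \ldots , j(n + 1) + i \}$. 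As noted in Proposition~\ref{relationgeneral}, $I_1$ corresponds to an order ideal in a poset isomorphic to $P_{i - 1}^{(j + 1)}$ and $I_2$ to an order ideal in a poset isomorphic to $P_{n - i + 1}$, via explicit shift maps of the form $p \mapsto p + c + d\, \rho(p)$. I would read off the four constants by tracking precisely which elements of $P_n^{(j)}$ lie in each of these two sub-posets, exactly as was done for $Q_n (i)$ and $Q_n (-i)$ in Section~2.

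Next, I would expand $\sigma(I) - \binom{|I|}{2}$ as a sum over $i$, $I_1$, and $I_2$, using these shifts to rewrite $\sigma(I_1)$ and $\sigma(I_2)$ in terms of $\sigma$-sums, $\rho$-weighted sums, and the cardinalities $|I_1|$, $|I_2|$ of the preimage ideals, in the same spirit as equations~\hyperref[sigmasum2]{(\ref*{sigmasum2})} and~\hyperref[lengthsum2]{(\ref*{lengthsum2})}. Collecting terms and recognizing the resulting convolutions in $n$ produces the claimed identity after passing to generating functions: the $A(x)^{m - i} G_0 (x)$ and $A(x) G_{i + 1}(x)$ pieces arise from the $\sigma - \binom{\cdot}{2}$ contributions of $I_1$ and $I_2$; the $R$- and $T$-type terms come from the $\rho$-weighted pieces combined with the shift factors $c, d$; and the product $T_0 (x) T_{i + 1}(x)$ is produced by the cross term $|I_1||I_2|$ inside $\binom{|I|}{2}$.

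The formulas for $G_0 (x)$ and $G_{m - 1}(x)$ follow from the same method with minor modifications. For $j = 0$ the relevant range becomes $i \in [1, n]$ and $I_1$ lies in a poset isomorphic to $P_{i - 1}^{(1)}$ while $I_2$ lies in a poset isomorphic to $P_{n - i - 1}$, so the convolution index shifts by one; for $j = m - 1$ the level index wraps back to $0$, which collapses the $A(x)^{m - i}$ factors into the leading $2A(x)$ and introduces the additional $T_0$ and $R_0$ contributions appearing in the third formula.

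The main obstacle is bookkeeping. One must correctly identify the four shift constants in each of the three cases, expand $\sigma(I) - \binom{|I|}{2}$ into the seven or eight distinct groupings of terms (tracking for instance the $i(m - i)$ factor that emerges from multiplying the shift $d$ on $I_1$ against $|I_1|$), and verify that the coefficients of $A(x)^{m - i} G_0 (x)$, $x A'(x) R_0(x)$, $x A(x)^{m - i} T_0'(x)$, $x^2 A''(x) A(x)^{m - i}$, $x^2 A'(x)^2 A(x)^{m - i - 1}$, and the remaining terms match the stated formulas. The algebra is lengthy but requires no ideas beyond those in Propositions~\ref{tproposition}, \ref{rproposition}, and~\ref{gproposition}; the structural source of every term in the three displayed identities is visible directly from the decomposition above.
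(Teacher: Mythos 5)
Your proposal follows the paper's proof essentially verbatim: the paper likewise derives coefficient-level recursions for $G_n^{(j)}$, $G_n^{(0)}$, and $G_n^{(m-1)}$ by the same block-plus-$I_1$-plus-$I_2$ decomposition of $J_i\bigl(P_n^{(j)}\bigr)$ (citing the proof of Proposition~\ref{gproposition} as the template) and then passes to generating functions. The only quibble is a small indexing inconsistency in your $j = 0$ case (with $i \in [1, n]$ the pieces are $P_{i-1}^{(1)}$ and $P_{n-i}$, becoming $P_i^{(1)}$ and $P_{n-i-1}$ only after reindexing), which the bookkeeping you describe would resolve.
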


\begin{proof}
Following the proof of \hyperref[gproposition]{Proposition \ref*{gproposition}}, one obtains that 
\begin{flalign*}
G_n^{(j)} &= \displaystyle\sum_{i = 0}^n \Big( A_{n - i} \big( G_i^{(j + 1)} + (n - i) R_i^{(j + 1)} - i T_i^{(j + 1)} \big) \\
& \qquad + A_i^{(j + 1)} \big( G_{n - i}^{(0)} + (i + 1) R_{n - i}^{(0)} + (j (n + 1) + 1) T_{n - i}^{(0)} \big) \\
& \qquad + (j (n + 1) + 1) i A_i^{(j + 1)} A_{n - i} - T_i^{(j + 1)} T_{n - i}^{(0)} \Big)
\end{flalign*}

\noindent for each integer $j \in [1, m - 2]$;
\begin{flalign*}
G_n^{(0)} &= \displaystyle\sum_{i = 0}^n \Big( A_{n - i - 1} \big( G_i^{(1)} + (n - i - 1) R_i^{(1)} - i T_i^{(1)} \big) \\
& \qquad + A_i^{(1)} \big( G_{n - i - 1}^{(0)} + (i + 1) R_{n - i - 1}^{(0)} + T_{n - i - 1}^{(0)} \big) \\
& \qquad + i A_i^{(1)} A_{n - i - 1} - T_i^{(1)} T_{n - i - 1}^{(0)} \Big); 
\end{flalign*}

\noindent and
\begin{flalign*}
G_n^{(m - 1)} &= \displaystyle\sum_{i = 0}^n \Big( A_{n - i} \big( G_i^{(0)} + (n - i + 1) R_i^{(0)} + (m(n + 1) + 1 - i) T_i^{(0)} \big) \\
& \qquad + A_i \big( G_{n - i}^{(0)} + (i + 1) R_{n - i}^{(0)} + ((m - 1)(n + 1) + 1) T_{n - i}^{(0)} \big) \\
& \qquad + ((m - 1)(n + 1) + 1) i A_i A_{n - i} - T_i^{(0)} T_{n - i}^{(0)} \Big).
\end{flalign*}

\noindent These recursive relations imply the proposition. 
\end{proof}

\begin{cor}
We have that 
\begin{flalign}
\label{g0}
G_0 (x) &= \bigg( (m + 1) x A(x)^m R_0 (x) + (m^2 + m) x^2 A' (x) A(x)^{m - 1} R_0 (x) \nonumber \\
& \quad + \binom{m + 2}{2} x A(x)^m T_0 (x) + \binom{m + 1}{2} x^2 A' (x) A(x)^{m - 1} T_0 (x) \nonumber \\
& \quad + \binom{m + 2}{3} x^2 A' (x) A(x)^m + \binom{m + 2}{4} x^3 A' (x)^2 A(x)^{m - 1} \nonumber \\
& \quad - \binom{m + 1}{2} x A(x)^{m - 1} T_0 (x)^2 \bigg) (1 - (m + 1) x A(x)^m)^{-1}. 
\end{flalign}
\end{cor}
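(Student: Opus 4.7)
The strategy parallels the derivations of $T_0(x)$ and $R_0(x)$ carried out in the two preceding corollaries. \hyperref[grelation]{Proposition \ref*{grelation}} supplies three recursions: the main one relates $G_i(x)$ to $G_{i + 1}(x)$ (together with $R_{i + 1}$, $T_{i + 1}'$, and quantities indexed only by $0$) for $i \in [1, m - 2]$, and two boundary formulas give $G_0(x)$ in terms of $G_1(x)$ and $G_{m - 1}(x)$ in terms of $G_0(x)$. The plan is first to iterate the main recursion to write
\begin{flalign*}
G_1 (x) = A(x)^{m - 2} G_{m - 1} (x) + \displaystyle\sum_{j = 1}^{m - 2} A(x)^{j - 1} E_j (x),
\end{flalign*}
where $E_j(x)$ collects all terms on the right-hand side of the $G_j$ recursion other than $A(x) G_{j + 1}(x)$; then substitute the explicit expression for $G_{m - 1}(x)$; then substitute the resulting formula for $G_1(x)$ into the $G_0$ recursion; and finally isolate $G_0(x)$, which should satisfy an equation of the form $(1 - (m + 1) x A(x)^m) G_0 (x) = N(x)$ for some explicit $N(x)$.

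The sums appearing in this iteration are all of the form $\sum_{j = 1}^{m - 2} A(x)^{j - 1} f_j (x)$. After the reindexing $j \mapsto j + 1$, the $j$-dependent contributions coming from $x A'(x) R_{j + 1}(x)$, $-x A(x) T_{j + 1}'(x)$, and $-T_0(x) T_{j + 1}(x)$ can be expressed in terms of the sums $\sum_{j = 1}^{m - 1} A(x)^{j - 1} R_j(x)$, $\sum_{j = 1}^{m - 1} A(x)^j T_j'(x)$, and $\sum_{j = 1}^{m - 1} A(x)^{j - 1} T_j(x)$ (minus boundary terms at $j = 1$), which are given in closed form by \hyperref[rsum]{(\ref*{rsum})}, \hyperref[tderivativesum]{(\ref*{tderivativesum})}, and the straightforward analogue of \hyperref[tsum]{(\ref*{tsum})} obtained by multiplying \hyperref[tj]{(\ref*{tj})} by $A(x)^{j - 1}$. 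The remaining contributions depend on $j$ only through elementary polynomials such as $j + 1$, $j(m - j)$, $(2j + 1)(m - j)$, and $j(m - j)^2$, whose weighted sums $\sum_{j = 1}^{m - 2} A(x)^{j - 1} \cdot (\cdots)$ are routine elementary expressions in $x$, $A(x)$, $A'(x)$, $A''(x)$, $T_0(x)$, $T_0'(x)$, and $R_0(x)$. In particular, the coefficient of $G_0(x)$ in $G_1(x)$ works out to $2 A(x)^{m - 1} + (m - 2) A(x)^{m - 1} = m A(x)^{m - 1}$: the first summand comes from $A(x)^{m - 2} G_{m - 1}(x)$ (via $G_{m - 1} = 2 A G_0 + \cdots$), and the second from the $A(x)^{m - j} G_0(x)$ term inside $E_j$ summed against $A(x)^{j - 1}$ over $j \in [1, m - 2]$. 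Multiplying by $x A(x)$ and adding the direct $x A(x)^m G_0(x)$ from the $G_0$ recursion yields a total $G_0(x)$ coefficient of $(m + 1) x A(x)^m$, exactly matching the denominator in \hyperref[g0]{(\ref*{g0})}.

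Once $G_1(x)$ has been substituted, the remaining occurrences of $R_1(x)$, $T_1(x)$, and $T_1'(x)$ are eliminated in favor of $R_0$, $T_0$, $T_0'$, $A$, and $A'$ by applying \hyperref[tj]{(\ref*{tj})} and \hyperref[rj]{(\ref*{rj})} at $j = 1$ together with the derivative of \hyperref[tj]{(\ref*{tj})} at $j = 1$. The relation $x A(x)^{m + 1} = A(x) - 1$ from \hyperref[relationgeneral]{Proposition \ref*{relationgeneral}} may be invoked wherever it shortens the algebra. The main obstacle is not conceptual but purely computational: one must verify that the numerator produced by this procedure coincides, after simplification, with the one appearing in \hyperref[g0]{(\ref*{g0})}. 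As in the $m = 2$ case, this is tedious but mechanical polynomial manipulation, and the details may reasonably be omitted.
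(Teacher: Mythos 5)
Your proposal is correct and follows essentially the same route as the paper: the paper organizes the iteration of the recursions from \hyperref[grelation]{Proposition \ref*{grelation}} as a telescoping sum $G_0 (x) = \big( G_0 (x) - x A(x) G_1 (x) \big) + x \sum_{j = 1}^{m - 2} \big( A(x)^j G_j (x) - A(x)^{j + 1} G_{j + 1} (x) \big) + x A(x)^{m - 1} G_{m - 1} (x)$ and then invokes \hyperref[tsum]{(\ref*{tsum})}, \hyperref[tderivativesum]{(\ref*{tderivativesum})}, and \hyperref[rsum]{(\ref*{rsum})} to evaluate the resulting sums, which is algebraically identical to your plan (your count of the $G_0$ coefficient $(m + 1) x A(x)^m$ matches the paper's denominator). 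The only cosmetic difference is that the paper absorbs the boundary terms $R_1$, $T_1$, $T_1'$ into the $j = 1$ terms of those closed-form sums rather than eliminating them separately via \hyperref[tj]{(\ref*{tj})} and \hyperref[rj]{(\ref*{rj})}.
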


\begin{proof}
Using \hyperref[grelation]{Proposition \ref*{grelation}} and the equality 
\begin{flalign*}
G_0 (x) = \big( G_0 (x) - x A(x) G_1 (x) \big) + x \displaystyle\sum_{j = 1}^{m - 2} \big( A(x)^j G_j (x) - A(x)^{j + 1} G_{j + 1} (x) \big) + x A(x)^{m - 1} G_{m - 1} (x)
\end{flalign*}

\noindent gives 
\begin{flalign*}
G_0 (x) &= \Bigg( (m + 1) x A(x)^m R_0 (x) + \displaystyle\frac{(m^2 + m + 2) x^2 A' (x) A(x)^{m - 1} R_0 (x)}{2} \\
& \quad + x^2 A' (x) \displaystyle\sum_{j = 1}^{m - 1} A(x)^{j - 1} R_j (x) - x^2 \displaystyle\sum_{j = 1}^{m - 1} A(x)^j T_j' (x) \\
& \quad + \binom{m + 2}{2} x A(x)^m T_0 (x) + \displaystyle\frac{m(m^2 + 5) x^2 A' (x) A(x)^{m - 1} T_0 (x)}{6} \\
& \quad + \displaystyle\frac{(m + 2)(m - 1) x^2 A(x)^m T_0' (x)}{2}  + \displaystyle\frac{m(2m + 1)(m + 1) x^2 A' (x) A(x)^m}{6} \\
& \quad + \binom{m + 1}{3} x^3 A'' (x) A(x)^m + \displaystyle\frac{(m - 1) m^2 (m + 1) x^3 A' (x)^2 A(x)^{m - 1}}{12} \\
& \quad - x A(x)^{m - 1} T_0 (x)^2 - x T_0 (x) \displaystyle\sum_{j = 1}^{m - 1} A(x)^{j - 1} T_j (x) \Bigg) (1 - (m + 1) x A(x)^m)^{-1}.  
\end{flalign*}

\noindent Applying \hyperref[tsum]{(\ref*{tsum})}, \hyperref[tderivativesum]{(\ref*{tderivativesum})}, and \hyperref[rsum]{(\ref*{rsum})} to the above yields  \hyperref[g0]{(\ref*{g0})}. 
\end{proof}

\noindent We may now prove \hyperref[idealtheoremgeneral]{Theorem \ref*{idealtheoremgeneral}}. 

\begin{proof}[Proof of Theorem 1.3]
As stated previously, it suffices to establish \hyperref[functiongeneral]{(\ref*{functiongeneral})}. The left side of this equality involves derivatives of $A(x)$, which can be expressed in terms of $A(x)$ using \hyperref[relationgeneral]{Proposition \ref*{relationgeneral}}. Specifically, differentiating the second equality stated in \hyperref[relationgeneral]{Proposition \ref*{relationgeneral}}, we obtain that
\begin{flalign} 
\label{derivativegeneral}
A' (x) = \displaystyle\frac{A(x)^{m + 1}}{1 - (m + 1) x A(x)^m}. 
\end{flalign}

\noindent Differentiating again yields 
\begin{flalign}
\label{secondderivativegeneral}
A'' (x) = \displaystyle\frac{(m + 1) A(x)^m \big( A' (x) + A(x)^{m + 1} - x A(x)^m A'(x) \big)}{(1 - (m + 1) x A(x)^m)^2} 
\end{flalign}

\noindent and repeating gives 
\begin{flalign}
\label{thirdderivativegeneral}
A''' (x) &= (m + 1) A(x)^{m - 1} \big( A(x) A'' (x) + (m - 1) m x A(x)^m A' (x)^2 + (4m + 2) A(x)^{m + 1} A' (x) \nonumber \\
& \quad + m A' (x)^2 - (m + 2) x A(x)^{m + 1} A'' (x) + (m + 1) x^2 A(x)^{2m + 1} A'' (x) \nonumber \\
& \quad - 2(m + 1) x A' (x) A(x)^{2m + 1} + 2 (m + 1) A(x)^{2m + 2} \big) \big( 1 - (m + 1) x A(x)^m)^{-3}. 
\end{flalign}

\noindent Now, we may express the left side of \hyperref[functiongeneral]{(\ref*{functiongeneral})} as a rational function in $x$ and $A(x)$ using \hyperref[g0]{(\ref*{g0})}, \hyperref[derivativegeneral]{(\ref*{derivativegeneral})}, \hyperref[secondderivativegeneral]{(\ref*{secondderivativegeneral})}, \hyperref[thirdderivativegeneral]{(\ref*{thirdderivativegeneral})}, \hyperref[t0]{(\ref*{t0})}, and \hyperref[r0]{(\ref*{r0})}. After inserting these identities into the left side and simplifying, one obtains $0$. The below Sage code verifies this claim since its output is $0$. 

\noindent A,m=var(`A',`m')

\noindent A1=A\textasciicircum (m+1)/(1-(m+1)*x*A\textasciicircum m)

\noindent A2=(m+1)*A\textasciicircum m*(A1+A\textasciicircum (m+1)-x*A\textasciicircum m*A1)/(1-(m+1)*x*A\textasciicircum m)\textasciicircum 2

\noindent A3=(m+1)*A\textasciicircum (m-1)*(A*A2+(m-1)*m*x*A\textasciicircum m*A1\textasciicircum 2+(4*m+2)*A\textasciicircum (m+1)*A1+m*A1\textasciicircum 2

\noindent -(m+2)*x*A\textasciicircum (m+1)*A2+(m+1)*x\textasciicircum 2*A\textasciicircum (2*m+1)*A2-2*(m+1)*x*A\textasciicircum (2*m+1)*A1

\noindent +2*(m+1)*A\textasciicircum (2*m+2))/(1-(m+1)*x*A\textasciicircum m)\textasciicircum 3 

\noindent T=binomial(m+1,2)*x\textasciicircum 2*(A1)\textasciicircum 2/A 

\noindent R=(binomial(m+1,2)*x*A1*T+binomial(m+1,3)*x\textasciicircum 2*(A1)\textasciicircum 2)/A

\noindent G=((m+1)*x*R*A\textasciicircum m+(m\textasciicircum 2+m)*x\textasciicircum 2*A1*R*A\textasciicircum (m-1)+binomial(m+2,2)*x*A\textasciicircum m*T

\noindent +binomial(m+1,2)*x\textasciicircum 2*A1*A\textasciicircum (m-1)*T+binomial(m+2,3)*x\textasciicircum 2*A1*A\textasciicircum m

\noindent +binomial(m+2,4)*x\textasciicircum 3*(A1)\textasciicircum 2*A\textasciicircum (m-1)-binomial(m+1,2)*x*A\textasciicircum (m-1)*T\textasciicircum 2)/(1-(m+1)*x*A\textasciicircum m)

\noindent d=m*(m+1)*x\textasciicircum 3*A3+m*(2*m+4)*x\textasciicircum 2*A2-24*G

\noindent d.full\textunderscore simplify()
\end{proof}

\section{Acknowledgements}
This research was conducted under the supervision of Joe Gallian at the University of Minnesota Duluth REU, funded by NSF Grant 1358659 and NSA Grant H98230-13-1-0273. The author heartily thanks Fabrizio Zanello and Joe Gallian for suggesting the topic of this project and for their valuable advice. The author also thanks Timothy Chow for his insightful discussions, Noah Arbesfeld for his comments, and the referees for their suggestions.

\end{document}